
\documentclass{birkjour}
\usepackage{mathrsfs}
\usepackage[colorlinks, citecolor=blue,linkcolor=blue]{hyperref}
\usepackage{amssymb,amsmath,latexsym,mathrsfs}
\usepackage{amsfonts}
%
%
%
 \newtheorem{thm}{Theorem}[section]
 \newtheorem{cor}[thm]{Corollary}
 \newtheorem{lem}[thm]{Lemma}
 
 \theoremstyle{definition}
 \newtheorem{defn}[thm]{Definition}
 \newtheorem{rem}[thm]{Remark}
 
 \numberwithin{equation}{section}

\begin{document}

\title[Growth and distortion results]
 {Growth and Distortion Results for a Class of Biholomorphic Mapping and Extremal Problem with Parametric Representation in $\mathbb{C}^n$}

\author[Z. Tu]{Zhenhan Tu}
\address{School of Mathematics and Statistics, Wuhan University, Wuhan, Hubei $430072$, People¡¯s Republic of China}
\email{zhhtu.math@whu.edu.cn}

\author[L. Xiong]{Liangpeng Xiong$^*$}
\address{School of Mathematics and Statistics, Wuhan University, Wuhan, Hubei $430072$, People¡¯s Republic of China}
\email{lpxiong2016@whu.edu.cn\\
\\$^*$Corresponding author}

\subjclass{32H02 $\cdot$ 30C45}

\keywords{Distortion estimates $\cdot$ Extreme points $\cdot$ Growth theorems $\cdot$ Starlike mappings $\cdot$ Support points}

\date{January 25, 2018}

\begin{abstract}
Let $\widehat{\mathcal {S}}_g^{\alpha, \beta}(\mathbb{B}^n)$ be a subclass of normalized biholomorphic mappings defined on the unit ball in $\mathbb{C}^n,$
which is closely related to the starlike mappings. Firstly, we obtain the growth theorem for $\widehat{\mathcal {S}}_g^{\alpha, \beta}(\mathbb{B}^n)$.
Secondly, we apply the growth theorem and a new type of the boundary Schwarz
lemma to establish the distortion theorems of the Fr\'{e}chet-derivative type
and the Jacobi-determinant type for this subclass, and the
distortion theorems with $g$-starlike mapping (resp. starlike mapping) are partly established also.
At last, we study the Kirwan and Pell type results for the compact set of mappings which have $g$-parametric representation associated with a
modified Roper-Suffridge extension operator, which extend some earlier related results.
\end{abstract}

\maketitle
\section{Introduction}\label{sec1}
In geometric theory of one complex variable, the following growth theorem and distortion theorem for biholomorphic
functions are well known.\\\\
{\bf Theorem A} (Duren \cite{dpl})\label{thA}\quad Let $f$ be a biholomorphic function on the unit disk $\mathbb{D}=\{z\in \mathbb{C}: |z|<1\},$ and
$f(0)=f'(0)-1=0$. Then
\begin{equation*}
\frac{|z|}{(1+|z|)^2}\leqslant |f(z)|\leqslant \frac{|z|}{(1-|z|)^2},\,\,\frac{1-|z|}{(1+|z|)^3}\leqslant |f'(z)|\leqslant \frac{1+|z|}{(1-|z|)^3}.
\end{equation*}\\

However, in the case of several complex variables, Cartan \cite{ch} has pointed out
that the above theorem does not hold. Therefore, in order to obtain some positive
results, it is necessary to require some
additional properties of mappings such as the convexity, the starlikeness and so on. Some results related the growth theorems for starlike mappings and the subclasses of starlike mappings on different domains can be found in many papers (see, e.g. \cite{HH2, XLT2, XLT3}). Although there are a lot of significant results which cope with the distortion theorems for convex mappings (see, e.g. \cite{bf, chhk, hkg}), there are only a few
results directly concerning the distortion theorem for starlike mappings (see \cite{lw, ll1, ll2}).
Until now, it is difficult to obtain the corresponding distortion theorems for normalized biholomorphic starlike mappings
even on the unit ball or the unit polydisc in $\mathbb{C}^n$. This paper will make progress along this lines.

To proceed further, we first introduce some notations and definitions. Denote by $\mathbb{C}^n$ the $n$-dimensional complex Hilbert space with the inner product and the norm given
$$\langle z,w\rangle=\sum\limits_{j=1}^{n}z_j\overline{{w}_j}, \,\,\|z\|=\sqrt{\langle z,z\rangle},$$
where $z, w\in \mathbb{C}^n$. Write $\mathbb{B}^n(0,r)=\{z\in \mathbb{C}^n: \|z\|=\sqrt{|z_1|^2 +...+ |z_n|^2} <r\}$ for the
open ball with center $0$ and radius $r\,\,(0<r\leqslant1)$ in $\mathbb{C}^n$.
The  boundary of $\mathbb{B}^n(0,r)$ is defined by
$\partial\mathbb{B}^n(0,r)=\{z\in \mathbb{C}^n: \|z\|=r\}.$ When $r=1$, $\mathbb{B}^n(0,1)=\mathbb{B}^n$ is the unit open ball in $\mathbb{C}^n$.
In the case of one complex
variable, $\mathbb{B}^1$ is denoted by $\mathbb{D}$.
Let $H(\mathbb{B}^n)$ be the family of all holomorphic mappings from $\mathbb{B}^n$ into
$\mathbb{C}^n$. Throughout
this paper, we write a point $z\in \mathbb{C}^n$ as a column vector in the following $n\times1$ matrix
form
$$z=\left(
      \begin{array}{c}
        z_1 \\
        z_2 \\
       \vdots\\
       z_n \\
      \end{array}
    \right),$$
and the symbol $'$ stands for the transpose of vectors or matrices. For $f\in H(\mathbb{B}^n)$, we
also write it as $f=( f_1, f_2,..., f_n)'$, where $f_j$ is a holomorphic function from $\mathbb{B}^n$
to $\mathbb{C},$ $j = 1, . . . , n.$ The derivative of $f\in H(\mathbb{B}^n)$ at a point $a\in \mathbb{B}^n$ is the complex
Jacobian matrix of $f$ given by
$$J_f(a)=\left(
           \begin{array}{c}
             \frac{\partial f_i}{\partial z_j}(a) \\
           \end{array}
         \right)_{n\times n}.
$$
If $f\in H(\mathbb{B}^n)$, we say that $f$ is normalized if
$f(0)=0$ and $J_f(0)=I_n$, where $I_n$ is the identity matrix. We say that $f\in H(\mathbb{B}^n)$ is locally biholomorphic on $\mathbb{B}^n$ if $J_f(z)$ is nonsingular at each $z\in \mathbb{B}^n$.
Let $\mathcal {L}\mathcal {S}(\mathbb{B}^n)$ be the subset of $H(\mathbb{B}^n)$ consisting of all normalized locally biholomorphic mappings on $\mathbb{B}^n.$
A holomorphic
mapping $f: \mathbb{B}^n\rightarrow \mathbb{C}^n$ is said to be biholomorphic on $\mathbb{B}^n$ if the inverse $f^{-1}$ exists and is
holomorphic on the open set $f(\mathbb{B}^n)$. Let $\mathcal {S}(\mathbb{B}^n)$ be the subset of $H(\mathbb{B}^n)$ consisting of all normalized biholomorphic
mappings on $\mathbb{B}^n.$ In the case of one complex variable, the family $\mathcal {S}(\mathbb{D})$ is denoted by
$\mathcal {S}.$

If $f, g\in H(\mathbb{B}^n),$ we say that $f$ is subordinate to $g$ (we denote by $f\prec g$) if there exists a Schwarz
mapping $v$ (i.e., $v\in H(\mathbb{B}^n)$ with $\|v(z)\|\leqslant \|z\|$, $z\in \mathbb{B}^n)$ such that $f=g\circ v.$

We next consider some subclasses of $\mathcal {S}(\mathbb{B}^n)$.
It is well-known that a locally biholomorphic mapping $f\in H(\mathbb{B}^n)$ such that
$f(0)=0$ is starlike if and only if (see, Suffridge \cite{ST})
$$\Re \langle (J_f(z))^{-1}f(z),z\rangle>0,\,\, z\in \mathbb{B}^n\backslash \{0\}.$$
The following class of mappings plays the role of the Carath\'eodory class in $\mathbb{C}^n$ (see,
e.g. Pfaltzgraff \cite{PJ1}, Suffridge \cite{ST}):
$$\mathcal {M}=\{h\in H(\mathbb{B}^n):h(0)=0, \,\,J_h(0)=I_n,\,\, \Re \langle h(z),z\rangle>0,\,\, z\in \mathbb{B}^n\backslash \{0\}\}.$$
The class $\mathcal {M}$ is important in the study of various issues in the multidimensional
geometric function theory.
\begin{defn}\label{de1.1}
Let $g\in H(\mathbb{D})$ be a univalent function such that $g(0)=1, g(\overline{\zeta})=\overline{g(\zeta)}$
for $\zeta\in \mathbb{D}$ (i.e. $g$ has real coefficients), 	$\Re g(\zeta)> 0$ on $\mathbb{D}$, and assume that $g$ satisfies
the following conditions for $r\in (0, 1):$
\begin{equation}\label{eq1}
\begin{cases}
\min\limits_{|\zeta|=r}\Re g(\zeta)=\min\{g(r),g(-r)\},&\\
 \max\limits_{|\zeta|=r}\Re g(\zeta)=\max\{g(r),g(-r)\}.&
\end{cases}
\end{equation}
The
class $\mathcal {M}_g$ was given by Graham-Hamada-Kohr \cite{GHK1} as
$$\mathcal {M}_g=\bigg\{h\in H(\mathbb{B}^n):h(0)=0,\,\, J_h(0)=I_n,\,\, \bigg\langle h(z),\frac{z}{\|z\|^2}\bigg\rangle\in g(\mathbb{D})\bigg\},$$
where $g$ satisfies the assumptions of Definition \ref{de1.1} and $z\in \mathbb{B}^n\backslash \{0\}.$ Clearly, $\mathcal {M}_g\subseteq\mathcal {M}$ and if $g(\zeta)\equiv\frac{1-\zeta}{1+\zeta}$,
then $\mathcal {M}_g\equiv\mathcal {M}$.

Let $\alpha\in [0,1),$ $\beta\in(-\frac{\pi}{2}, \frac{\pi}{2})$. We define $\widetilde{\mathcal {M}}^{\alpha,\beta}_g$ to be the class of mappings given by
\begin{align*}
\widetilde{\mathcal {M}}^{\alpha,\beta}_g&=\bigg\{h\in H(\mathbb{B}^n):h(0)=0,\,\, J_h(0)=I_n, \\
&\frac{-\alpha+\sqrt{-1}\tan \beta}{1-\alpha}+\frac{1-\sqrt{-1}\tan \beta}{1-\alpha}\bigg\langle h(z),
\frac{z}{\|z\|^2}\bigg\rangle\in g(\mathbb{D}), \,\,z\in \mathbb{B}^n\backslash \{0\}\bigg\}.
\end{align*}
Note that if $\alpha=\beta=0$, then $\widetilde{\mathcal {M}}^{0,0}_g$ coincides with the $\mathcal {M}_g$.
\end{defn}
\begin{defn}\label{de1.2}
Suppose that $g$ satisfies the assumptions of Definition \ref{de1.1} and $f\in\mathcal {L}\mathcal {S}(\mathbb{B}^n),\,\, \alpha\in(0,1)$, $\beta\in (-\frac{\pi}{2},
\frac{\pi}{2})$. The mapping $f$ is said to be in
the class $\widehat{\mathcal {S}}_g^{\alpha, \beta}(\mathbb{B}^n)$ if $[J_f(z)]^{-1}f(z)\in \widetilde{\mathcal {M}}^{\alpha,\beta}_g.$
\end{defn}
\begin{rem}
(i)\,\,If $\alpha=\beta=0$ in Definition \ref{de1.2}, then
\begin{equation*}
\frac{\bar{z}'[J_f(z)]^{-1}f(z)}{\|z\|^2}\in g(\mathbb{D}), \,\,z\in \mathbb{B}^n\backslash \{0\}.
\end{equation*}
We denote the class of $g$-starlike mappings on $\mathbb{B}^n$ by $\mathcal {S}_{g}^*(\mathbb{B}^n)$ (see \cite{GHK10}).
Obviously, if $f\in\mathcal {S}_{g}^*(\mathbb{B}^n)$, then $f$ is also a normalized starlike mapping on $\mathbb{B}^n$.\\
(ii)\,\,If $\alpha=0$ in Definition \ref{de1.2}, then
\begin{equation*}
\sqrt{-1}\frac{\sin\beta}{\cos\beta}+\frac{e^{-\sqrt{-1}\beta}}{\cos\beta}\frac{\bar{z}'[J_f(z)]^{-1}f(z)}{\|z\|^2}\in g(\mathbb{D}), \,\,z\in \mathbb{B}^n\backslash \{0\}.
\end{equation*}
We denote the class of $g$-spirallike mappings of type $\beta$ on $\mathbb{B}^n$ by $\mathcal {S}_{g}^{s*}(\mathbb{B}^n,\beta)$ (see \cite{C2}).\\
(iii)\,\,If $\beta=0$ in Definition \eqref{de1.2}, then
\begin{equation*}
\frac{1}{1-\alpha}\frac{\bar{z}'[J_f(z)]^{-1}f(z)}{\|z\|^2}-\frac{\alpha}{1-\alpha}\in g(\mathbb{D}), \,\,z\in \mathbb{B}^n\backslash \{0\}.
\end{equation*}
We denote the class of $g$-almost starlike mappings of order $\alpha$ on $\mathbb{B}^n$ by $\mathcal {S}_{g}^{as*}(\mathbb{B}^n,\alpha)$ (see \cite{C2}).
\end{rem}
We now present the notions of Loewner chains and $g$-Loewner chains (see Graham-Hamada-Kohr \cite{GHK1}).
\begin{defn}
A mappings $f : \mathbb{B}^n\times[0,\infty)\rightarrow \mathbb{C}^n$ is called a Loewner chain if $f (\cdot, t)$
is biholomorphic on $\mathbb{B}^n, f (0, t) = 0, J_f(0, t) = e^t I_n$ for $t\geqslant0$, and $f (\cdot, s)\prec f (\cdot, t)$
whenever $0\leqslant s\leqslant t <\infty.$
\end{defn}
The requirement $f (\cdot, s)\prec f (\cdot, t)$ is equivalent to the condition that there is a
unique biholomorphic Schwarz mapping $v = v(z, s, t)$, called the transition mapping
associated to $f (z, t)$ such that $f (z, s) = f (v(z, s, t), t)$ for $z\in \mathbb{B}^n, t\geqslant s\geqslant 0.$

\begin{lem}\label{le1.5}
Suppose that $h(z, t) : \mathbb{B}^n\times[0,\infty)\rightarrow \mathbb{C}^n$ satisfies the following conditions:\\
{\rm (i)}\,\,$h(\cdot,t)\in \mathcal {M}$ for $t\geqslant0$.\\
{\rm (ii)}\,\,$h(z, \cdot)$ is measurable on $[0,\infty)$ for $z\in \mathbb{B}^n$.
Let $f = f(z, t) : \mathbb{B}^n\times[0,\infty)\rightarrow \mathbb{C}^n$ be a mapping such that $f(\cdot, t)\in \mathbb{B}^n,
f(0, t)=0,\,\, J_f(0, t)=e^tI_n$ for $t\geqslant0$, and $f(z, \cdot)$ is locally absolutely continuous on
$[0,\infty)$ locally uniformly with respect to $z\in \mathbb{B}^n$. Assume that
\begin{equation}\label{eq1.2}
\frac{\partial f}{\partial t}(z, t) = J_f(z, t)h(z, t), \,\,a.e. \,\,t\geqslant0, \,\,\forall z\in\mathbb{B}^n.
\end{equation}
Further, assume that there exists an increasing sequence $\{t_m\}_{m\in\mathbb{N}}$  such that $t_m > 0,
t_m\rightarrow\infty$ and
$\lim_{m\rightarrow\infty}e^{-t_m}f(z,t_m)=F(z)$
locally uniformly on $\mathbb{B}^n$. Then $f(z, t)$ is a
Loewner chain.\end{lem}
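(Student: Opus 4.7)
My approach follows the standard strategy for Loewner PDE lemmas in several complex variables (Pfaltzgraff, Graham--Kohr). First, I would construct transition mapping candidates via the auxiliary ODE
\begin{equation*}
\frac{\partial v}{\partial t}(z,s,t)=-h(v(z,s,t),t),\qquad v(z,s,s)=z,\quad 0\leqslant s\leqslant t,
\end{equation*}
which admits a unique absolutely continuous (in $t$) solution by Carath\'eodory's theorem, using the measurability of $h(z,\cdot)$ and the local Lipschitz property coming from $h(\cdot,t)\in\mathcal{M}$. Standard estimates for the class $\mathcal{M}$ then yield that $v(\cdot,s,t)$ is a univalent Schwarz mapping with $v(0,s,t)=0$ and $J_v(0,s,t)=e^{s-t}I_n$.

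Second, I would verify the identity $f(v(z,s,t),t)=f(z,s)$. Differentiating the left side in $t$, applying the chain rule together with the ODE for $v$ and the PDE \eqref{eq1.2}, the two contributions involving $h$ cancel exactly, so $\frac{d}{dt}f(v(z,s,t),t)=0$ a.e. Since the value at $t=s$ is $f(z,s)$, the identity holds for all $t\geqslant s\geqslant 0$.

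The main obstacle is the third step: showing $f(\cdot,t)$ is biholomorphic for \emph{every} $t\geqslant 0$, not merely along the sequence $\{t_m\}$. The hypothesis supplies $F(z)=\lim_m e^{-t_m}f(z,t_m)$ locally uniformly, with $J_F(0)=I_n$. For fixed $t$, let $u_m(z):=e^{t_m-t}v(z,t,t_m)$, a normalized univalent mapping; Loewner-type growth bounds on $v$ ensure $\{u_m\}$ forms a normal family on $\mathbb{B}^n$. Rewriting step two as
\begin{equation*}
e^{-t}f(z,t)=e^{t_m-t}\bigl[e^{-t_m}f(e^{t-t_m}u_m(z),t_m)\bigr]
\end{equation*}
and expanding $e^{-t_m}f(\zeta,t_m)=\zeta+A_m(\zeta)$, where $A_m\to F-\mathrm{id}$ locally uniformly with $A_m(\zeta)=O(\|\zeta\|^2)$ near $0$ uniformly in $m$, the rescaled error $e^{t_m-t}A_m(e^{t-t_m}u_m(z))$ decays to zero on compacta. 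Hence $u_m(z)\to e^{-t}f(z,t)$ locally uniformly, so $e^{-t}f(\cdot,t)$ is a locally uniform limit of normalized univalent mappings. By the several-variable Hurwitz theorem, since $J_{e^{-t}f}(0,t)=I_n\neq 0$, the limit $f(\cdot,t)$ is biholomorphic on $\mathbb{B}^n$.

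To conclude, step two now reads $f(\cdot,s)=f(\cdot,t)\circ v(\cdot,s,t)$ with $f(\cdot,t)$ biholomorphic and $v(\cdot,s,t)$ a Schwarz mapping, giving $f(\cdot,s)\prec f(\cdot,t)$ for $0\leqslant s\leqslant t$. Combined with $f(0,t)=0$ and $J_f(0,t)=e^tI_n$, the family $\{f(\cdot,t)\}_{t\geqslant 0}$ is a Loewner chain.
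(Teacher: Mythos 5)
The paper does not prove this lemma at all: it is stated as a known result, attributed immediately afterwards to Graham--Hamada--Kohr \cite{GHK1} and Pfaltzgraff \cite{PJ1}, so there is no in-paper argument to compare against. Your outline is exactly the standard proof from those sources --- Carath\'eodory existence for the transition ODE, the cancellation $\frac{d}{dt}f(v(z,s,t),t)=0$ giving the semigroup identity, and univalence of each $f(\cdot,t)$ via the locally uniform limit $u_m(z)=e^{t_m-t}v(z,t,t_m)\to e^{-t}f(z,t)$ together with the several-variable Hurwitz theorem --- and all the key steps (normal-family bound on $\{u_m\}$ from the growth estimate for $v$, uniform $O(\|\zeta\|^2)$ control of $A_m$ near the origin) are correctly identified, so as a plan it is sound; turning it into a full proof would only require writing out the class-$\mathcal{M}$ estimates you invoke.
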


The above characterization of Loewner chains was obtained by Graham-Hamada-Kohr \cite{GHK1}
and Pfaltzgraff \cite{PJ1}.
Now, we are able to recall the notions of a $g$-Loewner chain and $g$-parametric
representation (compare with
Chiril\u{a} \cite{C1} for
$g(\zeta)\equiv \frac{1-\zeta}{1+(1-2\gamma)\zeta}, \gamma\in (0,1), |\zeta|<1$ and compare with Graham-Hamada-Kohr-Kohr \cite{GHK10} for $g(\zeta)\equiv \frac{1-\zeta}{1+\zeta}$).

\begin{defn}
A mapping $f=f(z, t): \mathbb{B}^n\times[0,\infty)\rightarrow \mathbb{C}^n$ is called a $g$-Loewner
chain if $f(z, t)$ is a Loewner chain such that $\{e^{-t}f(\cdot, t)\}_{t\geqslant0}$ is a normal family on $\mathbb{B}^n$
and the mapping $h = h(z, t)$ in the Loewner differential equation \eqref{eq1.2} (see Lemma \ref{le1.5})
satisfies the condition $h(\cdot, t)\in \mathcal {M}_g$ for a.e. $t\geqslant 0.$
\end{defn}
Let $f :\mathbb{B}^n\rightarrow \mathbb{C}^n$ be a normalized holomorphic mapping. We say
that $f$ has $g$-parametric representation if there exists a $g$-Loewner chain $f(z, t)$ such
that $f=f(\cdot, 0).$ The notion of parametric representation was considered in  Bracci \cite{BGK}, Chiril\u{a} \cite{C2}, Graham-Hamada-Kohr \cite{GHK1}, Graham-Hamada-Kohr-Kohr \cite{GHK3},  Graham-Kohr-Pfaltzgraff \cite{GHK5}.
Let $\mathcal {S}_{g}^{0}(\mathbb{B}^n)$ be the set of mappings which have $g$-parametric representation.
\begin{rem}\label{re1.7}
(i)\,\,If $g(\zeta)\equiv \frac{1-\zeta}{1+(1-2\gamma)\zeta}, \gamma\in[0,1)$, then it reduces
to the set $\mathcal {S}_{\frac{1-\zeta}{1+(1-2\gamma)\zeta}}^{0}(\mathbb{B}^n)$ of mappings which have $\frac{1-\zeta}{1+(1-2\gamma)}$-parametric representation (see \cite{C1}).\\
(ii)\,\,If $g(\zeta)\equiv \frac{1-\zeta}{1+\zeta},$ then $\mathcal {S}_{\frac{1-\zeta}{1+\zeta}}^{0}(\mathbb{B}^n)$ reduces
to the usual set $\mathcal {S}^{0}(\mathbb{B}^n)$ of mappings which have parametric representation (see, e.g., \cite{GHK1}). It is clear that $\mathcal {S}^{0}(\mathbb{B}^n)\subseteq \mathcal {S}(\mathbb{B}^n).$\\
(iii)\,\, If $g$ is a convex function and satisfies the assumptions of Definition \ref{de1.1}, then $\mathcal {S}_{g}^{0}(\mathbb{B}^n)$ is compact in the topology of
$H(\mathbb{B}^n)$\, (see \cite{GHK10}).
\end{rem}
Let $\mathcal {F}$ be a nonempty subset of $H(\mathbb{B}^n)$. A point $f\in \mathcal {F}$ is called an extreme point of $\mathcal {F}$ provided that $f=tg+(1-t)h$, where
$t\in(0,1), g, h\in \mathcal {F},$ implies $f=g=h.$ A point $g\in \mathcal {F}$ is called a support point of $\mathcal {F}$ if there exists a continuous linear
functional $L : H(\mathbb{B}^n)\rightarrow \mathbb{C}$ such that $\Re L$ is nonconstant on $\mathcal {F}$ and
$$\Re L(g)=\max \{\Re L(h): h\in \mathcal {F}\}.$$
We denote by ex$\mathcal {F}$ and supp$\mathcal {F}$ the subsets of $\mathcal {F}$ consisting of extreme points of $\mathcal {F}$
and support points of $\mathcal {F}$, respectively (see, e.g., \cite{C2}, \cite{SS}).

For $n\geqslant2,$ let $\tilde{z}=(z_2,..., z_n)\in\mathbb{C}^{n-1}$ such that $z=(z_1, \tilde{z})\in\mathbb{C}^{n}.$
The Roper-Suffridge extension operator $\Phi_n: \mathcal {L}\mathcal {S}\rightarrow\mathcal {L}\mathcal {S}(\mathbb{B}^n)$ is defined by
$$\Phi_n(f)(z)=(f(z_1), \tilde{z}\sqrt{f'(z_1)}), \,\,z=(z_1, \tilde{z})\in \mathbb{B}^n.$$
We choose the branch of the power function such that $\sqrt{f'(z_1)}|_{z_1=0}=1$.
It provides a way of extending a locally
univalent function on the unit disc $\mathbb{D}$ to a locally biholomorphic mapping on the
Euclidean unit ball $\mathbb{B}^n$ (see \cite{RK}).

A modification of the Roper-Suffridge extension operator was given by Graham-Hamada-Kohr-Suffridge (see \cite{GHK2}):
\begin{equation}\label{eq4}
\Phi_{n,\widehat{\alpha}, \widehat{\beta}}(f)(z)=\Big(f(z_1), \tilde{z}\Big(\frac{f(z_1)}{z_1}\Big)^{\widehat{\alpha}}(f'(z_1)^{\widehat{\beta}}\Big),\,\, z=(z_1, \tilde{z})\in \mathbb{B}^n,
\end{equation}
where $\widehat{\alpha}\geqslant0, \widehat{\beta}\geqslant0$ and $f$ is a locally univalent function on $\mathbb{D}$, normalized by
$f(0)=f'(0)-1=0,$ and such that $f(z_1)\neq0$ for $z_1\in \mathbb{D}\setminus \{0\}$. We choose the
branches of the power functions such that
$(\frac{f(z_1)}{z_1})^{\widehat{\alpha}}|_{z_1=0}=1$, $(f'(z_1)^{\widehat{\beta}}|_{z_1=0}=1$.

Many mathematicians have investigated kinds of operators that preserve certain geometric and analytic properties (such as starlikeness, parametric representation,
extreme points and support points) (e.g. \cite{BGK,GHK3}).

In this paper, we organize the contents as follows. In Section \ref{sec2}, we shall establish the growth theorems
for subclass $\widehat{\mathcal {S}}_g^{\alpha, \beta}(\mathbb{B}^n)$ of biholomorphic mappings.
In Section \ref{sec3}, we shall apply the growth theorems and a new type of the boundary Schwarz
lemma for holomorphic self-mappings of the unit ball $\mathbb{B}^n$ to establish the distortion theorems of the Fr\'{e}chet-derivative type
and the Jacobi-determinant type for $\widehat{\mathcal {S}}_g^{\alpha, \beta}(\mathbb{B}^n)$ with some special points, and  the
distortion theorems associated with $g$-starlike mappings and some subclasses of $g$-starlike mappings are partly established on the unit ball in $\mathbb{C}^n$ also. In Section \ref{sec4}, we consider the
extreme points and support points with the extension operator $\Phi_{n,\hat{\alpha},\hat{\beta}}$ and $g$-parametric representation.

\section{Growth theorems associated with $\widehat{\mathcal {S}}_g^{\alpha, \beta}(\mathbb{B}^n).$}\label{sec2}
In this section, we shall obtain the growth theorems. These results generalize the conclusions in Hamada-Honda-Kohr \cite{HH2}, Xu-Liu \cite{XLT2} and Zhang \cite{XLT3}, which are important for kinds of subclasses of $g$-starlike mappings. Using Corollary \ref{co2.7},  we give an example of bounded support points for
$\mathcal {S}_g^0(\mathbb{B}^2)$ (see Graham-Hamada-Kohr-Kohr \cite{GHK10}, compare with $\mathcal {M}_g$ ).
\begin{lem}[\cite{G10}]\label{le2.1}
Suppose that $z(t):[0,1]\rightarrow \mathbb{C}^n$ is differentiable at the point $s$ which belongs to $[0,1]$, and $\|z(t)\|$ is differentiable at the point $s$
with respect to $t$. Then
$$\Re\bigg\langle \frac{dz(t)}{dt},z(t)\bigg\rangle\bigg|_{t=s}=\|z(t)\|\frac{d\|z(t)\|}{dt}\bigg|_{t=s}.$$
\end{lem}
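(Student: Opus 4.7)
The plan is to differentiate the identity $\|z(t)\|^{2}=\langle z(t),z(t)\rangle$ at $t=s$ along two routes and match the results.

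First I would handle the left-hand side. Writing $\phi(t)=\|z(t)\|$, I have $\|z(t)\|^{2}=\phi(t)^{2}$, and since $\phi$ is differentiable at $s$ by hypothesis, the chain rule for real-valued functions gives
\[
\frac{d}{dt}\|z(t)\|^{2}\bigg|_{t=s}=2\,\phi(s)\,\phi'(s)=2\,\|z(t)\|\,\frac{d\|z(t)\|}{dt}\bigg|_{t=s}.
\]

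Next I would compute the same derivative by expanding the inner product in coordinates. Since $\langle z(t),z(t)\rangle=\sum_{j=1}^{n}z_{j}(t)\overline{z_{j}(t)}$ and each component $z_{j}$ is differentiable at $s$, the product rule yields
\[
\frac{d}{dt}\langle z(t),z(t)\rangle\bigg|_{t=s}=\sum_{j=1}^{n}\!\Bigl(z_{j}'(s)\overline{z_{j}(s)}+z_{j}(s)\overline{z_{j}'(s)}\Bigr)=\langle z'(s),z(s)\rangle+\overline{\langle z'(s),z(s)\rangle},
\]
which equals $2\,\Re\langle z'(s),z(s)\rangle$. Equating the two expressions and dividing by $2$ gives the claimed identity.

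I do not expect any real obstacle: the argument is a direct chain-rule/product-rule manipulation, and the hypothesis that $\|z(t)\|$ is differentiable at $s$ (not merely $\|z(t)\|^{2}$) is exactly what is needed to invoke the chain rule on $\phi^{2}$ without worrying about the non-smooth point $\phi=0$. The only place to be slightly careful is the degenerate case $z(s)=0$, where both sides reduce to $0$ automatically (the right-hand side by the factor $\|z(s)\|$, the left-hand side because $\langle z'(s),z(s)\rangle=0$), so the identity holds there as well.
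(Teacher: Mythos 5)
Your argument is correct and complete: differentiating $\|z(t)\|^{2}=\langle z(t),z(t)\rangle$ at $t=s$ once via the chain rule on $\phi^{2}$ and once via the product rule in coordinates, then equating, is exactly the standard proof of this identity. The paper itself gives no proof — it cites the lemma from Gurganus \cite{G10} — and your derivation, including the remark on the degenerate case $z(s)=0$ and the observation that differentiability of $\|z(t)\|$ (not just of $\|z(t)\|^{2}$) is the hypothesis that makes the chain-rule step legitimate, is precisely what that reference supplies. Nothing to correct.
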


\begin{lem}[\cite{XLT2}]\label{le2.2}
Suppose that $F$ is a mapping on $\mathbb{B}^n, z\in\mathbb{B}^n\setminus \{0\}$, and $$z(t)=F^{-1}\bigg(\exp\big(-e^{-\sqrt{-1} \beta}t\big)F(z)\bigg)(0\leqslant t<+\infty).$$
Then
{\rm (i)}\,\,$\|z(t)\|$ is strictly increasing on  $[0,+\infty)$ with respect to $t$.\\
{\rm (ii)}\,\,$\frac{dz(t)}{dt}=-e^{-\sqrt{-1} \beta}[J_{F}(z(t)]^{-1}F(z(t)).$\\
{\rm (iii)}\,\,$\frac{d\|F(z(t))\|}{dt}=-\cos\beta\|F(z(t))\|$, $\lim\limits_{t\rightarrow +\infty}\frac{\|F(z(t))\|}{\|z(t)\|}=1$.
\end{lem}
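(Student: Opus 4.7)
The plan is to prove (ii) and (iii) by direct differentiation of the defining identity $F(z(t)) = \exp(-e^{-\sqrt{-1}\beta}t)F(z)$, and then to deduce (i) by applying Lemma \ref{le2.1} together with the structural hypothesis that $[J_F]^{-1}F \in \widetilde{\mathcal{M}}_g^{\alpha,\beta}$ (which is the geometric content of the class $\widehat{\mathcal{S}}_g^{\alpha,\beta}(\mathbb{B}^n)$ in which $F$ lives).

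For (ii), differentiate the defining identity in $t$. The chain rule gives $J_F(z(t)) \cdot \frac{dz(t)}{dt}$ on the left, while the right-hand side differentiates to $-e^{-\sqrt{-1}\beta}\exp(-e^{-\sqrt{-1}\beta}t)F(z) = -e^{-\sqrt{-1}\beta}F(z(t))$. Since $F$ is locally biholomorphic, $J_F(z(t))$ is invertible along the orbit, and left-multiplying by $[J_F(z(t))]^{-1}$ produces (ii). For (iii), the identity $\frac{d}{dt}F(z(t)) = -e^{-\sqrt{-1}\beta}F(z(t))$ yields
\begin{equation*}
\frac{d}{dt}\|F(z(t))\|^2 = 2\Re\bigl\langle -e^{-\sqrt{-1}\beta}F(z(t)),\,F(z(t))\bigr\rangle = -2\cos\beta\,\|F(z(t))\|^2,
\end{equation*}
and dividing by $2\|F(z(t))\|$ gives the ODE $\frac{d\|F(z(t))\|}{dt} = -\cos\beta\,\|F(z(t))\|$. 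Solving explicitly, $\|F(z(t))\| = e^{-t\cos\beta}\|F(z)\| \to 0$ as $t \to \infty$. Continuity of $F^{-1}$ at $0$ forces $z(t)\to 0$, and the normalization $F(0)=0$, $J_F(0)=I_n$ implies $F(w) = w + o(\|w\|)$ near the origin, so $\|F(z(t))\|/\|z(t)\| \to 1$.

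For (i), apply Lemma \ref{le2.1} to get $\|z(t)\|\tfrac{d\|z(t)\|}{dt} = \Re\langle \tfrac{dz(t)}{dt},\,z(t)\rangle$. Plugging in (ii) and using the factorisation $e^{-\sqrt{-1}\beta} = \cos\beta\,(1 - \sqrt{-1}\tan\beta)$ rewrites this as
\begin{equation*}
-\|z(t)\|^2\cos\beta\cdot\Re\Bigl[(1-\sqrt{-1}\tan\beta)\bigl\langle [J_F(z(t))]^{-1}F(z(t)),\,z(t)/\|z(t)\|^2\bigr\rangle\Bigr].
\end{equation*}
The hypothesis $[J_F]^{-1}F \in \widetilde{\mathcal{M}}_g^{\alpha,\beta}$ together with $\Re g > 0$ on $\mathbb{D}$ forces the real part inside the brackets to strictly exceed $\alpha\geqslant0$ (after clearing the constant $(-\alpha+\sqrt{-1}\tan\beta)/(1-\alpha)$ from Definition \ref{de1.1}), so $\tfrac{d\|z(t)\|}{dt}$ carries a strict definite sign for every $t\geqslant 0$, establishing strict monotonicity.

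The main obstacle is step (i): the algebraic bookkeeping that converts the coefficient $e^{-\sqrt{-1}\beta}$ in (ii) into the exact combination $(1-\sqrt{-1}\tan\beta)$ appearing in the defining condition of $\widetilde{\mathcal{M}}_g^{\alpha,\beta}$, and then extracting a sharp inequality for the real part from $\Re g > 0$. Parts (ii) and (iii) are routine verifications, but this last manipulation is what couples the flow to the geometric class.
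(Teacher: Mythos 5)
The paper does not prove Lemma \ref{le2.2} at all --- it is imported verbatim from Xu--Liu \cite{XLT2} --- so there is no internal proof to compare against; your task was effectively to supply the missing argument, and what you give is correct and is the standard derivation. Parts (ii) and (iii) follow exactly as you say from differentiating $F(z(t))=\exp(-e^{-\sqrt{-1}\beta}t)F(z)$, and your part (i) correctly couples Lemma \ref{le2.1} to the defining condition of $\widetilde{\mathcal{M}}_g^{\alpha,\beta}$: since $\Re g>0$, the membership condition gives $\Re\bigl[(1-\sqrt{-1}\tan\beta)\langle h(w),w/\|w\|^2\rangle\bigr]>\alpha\geqslant 0$ with $h=[J_F]^{-1}F$, and $e^{-\sqrt{-1}\beta}=\cos\beta\,(1-\sqrt{-1}\tan\beta)$ turns this into strict negativity of $\frac{d\|z(t)\|}{dt}$. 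This is consistent with how the lemma is actually used in the proof of Theorem \ref{th2.4}, where the same identities reappear.

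Two remarks. First, you prudently wrote ``strict definite sign'' rather than committing to a direction: the sign is negative, so $\|z(t)\|$ is strictly \emph{decreasing}, not increasing as the lemma literally states. This must be a typo in the statement (inherited from the citation): $z(0)=z$ while $\|F(z(t))\|=e^{-t\cos\beta}\|F(z)\|\to 0$ forces $z(t)\to 0$, and the orientation of the integrals $\int_{\|z(T)\|}^{\|z\|}$ in the proof of Theorem \ref{th2.4} presupposes $\|z(T)\|\leqslant\|z\|$. Your proof establishes the correct (intended) content. Second, a point you pass over silently, as does the paper: for $z(t)$ to be defined for all $t\geqslant 0$ one needs $\exp(-e^{-\sqrt{-1}\beta}t)F(z)\in F(\mathbb{B}^n)$, i.e.\ that $F(\mathbb{B}^n)$ is spirallike of type $\beta$; this is again guaranteed by the class hypothesis $[J_F]^{-1}F\in\widetilde{\mathcal{M}}_g^{\alpha,\beta}\subseteq$ the spirallike generators, but it deserves a sentence since the lemma as stated assumes only that ``$F$ is a mapping on $\mathbb{B}^n$.''
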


\begin{lem}\label{le2.3}
Suppose that $g$ satisfies the conditions of Definition \eqref{de1.1} and $h\in \widetilde{\mathcal {M}}^{\alpha,\beta}_g$, $\alpha\in [0,1)$, $\beta\in(-\frac{\pi}{2}, \frac{\pi}{2})$. Then the following conclusions hold:\\
{\rm (i)}\,\,$\mathcal {B}_1\leqslant \Re \langle e^{-\sqrt{-1}\beta}h(z), z\rangle\leqslant
\mathcal {B}_2$ for all $z\in \mathbb{B}^n$,
where
\begin{equation}\label{eq2.1}
\mathcal {B}_1=(1-\alpha)\cos\beta\|z\|^2\Big(\min\{g(\|z\|),g(-\|z\|)\}+\frac{\alpha}{1-\alpha}\Big)
\end{equation}
 and
 \begin{equation}\label{eq2.2}
\mathcal {B}_2=(1-\alpha)\cos\beta\|z\|^2\Big(\max\{g(\|z\|),g(-\|z\|)\}+\frac{\alpha}{1-\alpha}\Big).
\end{equation}
{\rm (ii)}\,\,In particular, if $g(z)=\frac{1+A\xi}{1+B\xi}, \xi\in \mathbb{D}, -1\leqslant A<B\leqslant1$, then
 \\$\mathcal {B}_3\leqslant|\langle h(z), z\rangle|\leqslant\mathcal {B}_4$ for all $z\in \mathbb{B}^n$, where
 \begin{equation}\label{eq2.3}
\mathcal {B}_3=\bigg(\frac{1+A\|z\|}{1+B\|z\|}-\frac{1}{1-\alpha}\sqrt{\alpha^2+\tan^{2}\beta}\bigg)(1-\alpha)\cos\beta\|z\|^2
\end{equation}
and
\begin{equation}\label{eq2.4}
\mathcal {B}_4=\bigg(\frac{1-A\|z\|}{1-B\|z\|}+\frac{1}{1-\alpha}\sqrt{\alpha^2+\tan^{2}\beta}\bigg)(1-\alpha)\cos\beta\|z\|^2.
\end{equation}
\end{lem}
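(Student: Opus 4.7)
The overall strategy is to encode the hypothesis $h\in\widetilde{\mathcal{M}}^{\alpha,\beta}_g$ as a genuine subordination in one complex variable, apply the Schwarz lemma to locate $q(z)$ inside an image set of $g$, and then read off both pairs of bounds by elementary computations. The main obstacle will be the slice reduction: verifying that the one-variable function is holomorphic at the origin and really subordinates to $g$. Everything afterward is algebra plus the triangle inequality.

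To set this up, I introduce
\[
p(z) := \Bigl\langle h(z), \frac{z}{\|z\|^2}\Bigr\rangle, \qquad q(z) := \frac{-\alpha+\sqrt{-1}\tan\beta}{1-\alpha} + \frac{1-\sqrt{-1}\tan\beta}{1-\alpha}\,p(z),
\]
so that by the definition of $\widetilde{\mathcal{M}}^{\alpha,\beta}_g$ one has $q(z)\in g(\mathbb{D})$ for $z\neq 0$. For any unit vector $u\in\partial\mathbb{B}^n$, I let $\psi(\zeta):=q(\zeta u)$. The normalization $h(0)=0$, $J_h(0)=I_n$ gives $h(\zeta u)=\zeta u + O(\zeta^2)$ and hence $p(\zeta u)=1+O(\zeta)$, so $\psi$ extends holomorphically across $0$ with $\psi(0)=1=g(0)$. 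Since $g$ is univalent on $\mathbb{D}$ and $\psi(\mathbb{D})\subseteq g(\mathbb{D})$, the standard subordination principle produces a Schwarz function $\phi$ with $\psi=g\circ\phi$, and the Schwarz lemma gives $|\phi(\zeta)|\leq|\zeta|$. Specializing to $z=\|z\|u$ places $q(z)$ inside $g(\{|\zeta|\leq\|z\|\})$.

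For part (i), the real part $\Re g$ is harmonic, so the maximum/minimum principle on $\{|\zeta|\leq r\}$ together with \eqref{eq1} yields
\[
\min\{g(\|z\|),g(-\|z\|)\} \leq \Re q(z) \leq \max\{g(\|z\|),g(-\|z\|)\}.
\]
Using $1-\sqrt{-1}\tan\beta=e^{-\sqrt{-1}\beta}/\cos\beta$, I rewrite the defining relation as $e^{-\sqrt{-1}\beta}p(z)=\cos\beta\,[(1-\alpha)q(z)+\alpha-\sqrt{-1}\tan\beta]$; taking real parts gives
\[
\Re\bigl(e^{-\sqrt{-1}\beta}p(z)\bigr)=\cos\beta\bigl[(1-\alpha)\Re q(z)+\alpha\bigr].
\]
Multiplying by $\|z\|^2$ converts the left side into $\Re\langle e^{-\sqrt{-1}\beta}h(z),z\rangle$, and inserting the displayed bounds on $\Re q(z)$ produces $\mathcal{B}_1$ and $\mathcal{B}_2$ after factoring $(1-\alpha)$ out of the bracket.

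For part (ii), $g(\zeta)=(1+A\zeta)/(1+B\zeta)$ is a M\"obius transformation with real coefficients, so $g$ sends $\{|\zeta|\leq r\}$ onto a closed Euclidean disk whose real diameter is $[g(r),g(-r)]$. The assumption $A<B$ forces $0<g(r)<1<g(-r)$, so this image disk lies in the right half-plane, and $g(\|z\|)\leq|q(z)|\leq g(-\|z\|)$ for every $z$. The identity $(1-\sqrt{-1}\tan\beta)p(z) = (1-\alpha)q(z)+\alpha-\sqrt{-1}\tan\beta$, combined with $|1-\sqrt{-1}\tan\beta|=1/\cos\beta$ and $|\alpha-\sqrt{-1}\tan\beta|=\sqrt{\alpha^2+\tan^2\beta}$, yields via the triangle inequality in both directions
\[
\cos\beta\bigl[(1-\alpha)|q(z)|-\sqrt{\alpha^2+\tan^2\beta}\bigr] \leq |p(z)| \leq \cos\beta\bigl[(1-\alpha)|q(z)|+\sqrt{\alpha^2+\tan^2\beta}\bigr].
\]
Multiplying by $\|z\|^2$ turns $|p(z)|$ into $|\langle h(z),z\rangle|$, and substituting the bounds on $|q(z)|$ delivers $\mathcal{B}_3$ and $\mathcal{B}_4$.
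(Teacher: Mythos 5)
Your proposal is correct and follows essentially the same route as the paper: restrict to a slice through the origin (your $\psi(\zeta)=q(\zeta u)$ is exactly the paper's $\mathcal{P}(\zeta)$ from \eqref{eq2.5}), invoke the subordination principle to get $q(z)\in g(\{|\zeta|\le\|z\|\})$, use the harmonic min/max principle with \eqref{eq1} for part (i), and for part (ii) use that the M\"obius image of the subdisk is the disk with real diameter $[g(r),g(-r)]$ followed by the triangle inequality. The only difference is cosmetic: you make explicit the removable singularity at $\zeta=0$ and the identities $|1-\sqrt{-1}\tan\beta|=1/\cos\beta$, $|\alpha-\sqrt{-1}\tan\beta|=\sqrt{\alpha^2+\tan^2\beta}$, which the paper leaves as ``elementary computations.''
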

\begin{proof}
Fix $z\in \mathbb{B}^n\setminus\{0\}$ and let $\mathcal {P}: \mathbb{D}\rightarrow \mathbb{C}$ be given by
\begin{equation}\label{eq2.5}
\mathcal {P}(\zeta)=
\begin{cases}
\frac{-\alpha+\sqrt{-1}\tan\beta}{1-\alpha}+\frac{1-\sqrt{-1}\tan\beta}{1-\alpha}
\frac{1}{\zeta}\langle h(\zeta \frac{z}{\|z\|}), \frac{z}{\|z\|}\rangle,& \zeta\neq0\\
 1,&\zeta=0
 \end{cases}.
 \end{equation}
Then $\mathcal {P}(\zeta)$ is a holomorphic function on $\mathbb{D}$.
Since $h\in \widetilde{\mathcal {M}}^{\alpha,\beta}_g, \mathcal {P}(0)=g(0)=1,$ it follows that $\mathcal {P}\prec g,$
and from the subordination principle it follows that $\mathcal {P}(r\mathbb{D})\subseteq g(r\mathbb{D}), r\in(0,1)$, where $r\mathbb{D}=\{z\in \mathbb{C}:|z|<r\}$. By the minimum and maximum principle for harmonic functions, we have
\begin{align}
\min\{g(|\zeta|),g(-|\zeta|)&\leqslant \frac{-\alpha}{1-\alpha}+\frac{1}{(1-\alpha)\cos\beta}
\Re \bigg[e^{-\sqrt{-1}\beta}\frac{1}{\zeta}\bigg\langle h\bigg(\zeta \frac{z}{\|z\|}\bigg),\frac{z}{\|z\|}\bigg\rangle\bigg]\notag\\
&\leqslant\max\{g(|\zeta|),g(-|\zeta|)\}.\notag
\end{align}
By letting $\zeta=\|z\|$ in the above relation, we obtain the conclusion (i).

Next, we prove the conclusion (ii). It is clear that the function $g=\frac{1+A\xi}{1+B\xi}$\\$(-1\leqslant A<B\leqslant1)$ satisfies the conditions in Definition \eqref{de1.1}.
In view of the above (i) arguments, we have $\mathcal {P}(\zeta)\prec\frac{1+A\xi}{1+B\xi}, \xi\in \mathbb{D}$. Geometrically, this subordination condition means that the image of the $\{|\zeta|<1\}$ by the function $\mathcal {P}$
is in the open disk whose two diameter endpoints are $\big[\frac{1+A|\zeta|}{1+B|\zeta|}, \frac{1-A|\zeta|}{1-B|\zeta|}\big]$. Thus, we deduce that
$$\frac{1+A|\zeta|}{1+B|\zeta|}\leqslant |\mathcal {P}(\zeta)|\leqslant\frac{1-A|\zeta|}{1-B|\zeta|}.$$
Further, using \eqref{eq2.5}, we have
$$\frac{1+A|\zeta|}{1+B|\zeta|}\leqslant \bigg|\frac{-\alpha+\sqrt{-1}\tan\beta}{1-\alpha}+\frac{1-\sqrt{-1}\tan\beta}{1-\alpha}
\frac{1}{\zeta}\Big\langle h\bigg(\zeta \frac{z}{\|z\|}\bigg), \frac{z}{\|z\|}\Big\rangle\bigg|\leqslant\frac{1-A|\zeta|}{1-B|\zeta|}.$$
Elementary computations yield that
\begin{equation}\label{eq2.6}
\bigg(\frac{1+A|\zeta|}{1+B|\zeta|}-\frac{1}{1-\alpha}\sqrt{\alpha^2+\tan^{2}\beta}\bigg)(1-\alpha)\cos\beta\leqslant
\frac{1}{|\zeta|}\Big|\Big\langle h\bigg(\zeta \frac{z}{\|z\|}\bigg), \frac{z}{\|z\|}\Big\rangle\Big|
\end{equation}
and
\begin{equation}\label{eq2.7}
\bigg(\frac{1-A|\zeta|}{1-B|\zeta|}+\frac{1}{1-\alpha}\sqrt{\alpha^2+\tan^{2}\beta}\bigg)(1-\alpha)\cos\beta\geqslant
\frac{1}{|\zeta|}\Big|\Big\langle h\Big(\zeta \frac{z}{\|z\|}\Big), \frac{z}{\|z\|}\Big\rangle\Big|.
\end{equation}
By letting $\zeta=\|z\|$ in \eqref{eq2.6} and \eqref{eq2.7}, we get the conclusion (ii).
\end{proof}

\begin{thm}\label{th2.4}
Suppose that $g$ satisfies the conditions of Definition \eqref{de1.1} and $\alpha\in [0,1)$, $\beta\in(-\frac{\pi}{2}, \frac{\pi}{2})$.
If $F\in \widehat{\mathcal {S}}_g^{\alpha, \beta}(\mathbb{B}^n)$, then $\Phi_1\leqslant \|F(z)\|\leqslant\Phi_2,\,\, z\in \mathbb{B}^n,$
where $$\Phi_1=\|z\|\exp\int_{0}^{\|z\|}\bigg(\frac{1}{(1-\alpha)\big(\max\{g(y),g(-y)\}+\frac{\alpha}{1-\alpha}\big)}-1\bigg)\frac{1}{y}dy$$and
$$\Phi_2=\|z\|\exp\int_{0}^{\|z\|}\bigg(\frac{1}{(1-\alpha)\big(\min\{g(y),g(-y)\}+\frac{\alpha}{1-\alpha}\big)}-1\bigg)\frac{1}{y}dy.$$
\end{thm}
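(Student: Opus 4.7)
The plan is to adapt the classical Loewner-chain method by tracking the norms $\|z(t)\|$ and $\|F(z(t))\|$ along the orbit $z(t) = F^{-1}\bigl(\exp(-e^{-\sqrt{-1}\beta}t)F(z)\bigr)$ supplied by Lemma \ref{le2.2}. Fix $z\in\mathbb{B}^n\setminus\{0\}$ and set $h(w) = [J_F(w)]^{-1}F(w)$. By Definition \ref{de1.2} we have $h\in\widetilde{\mathcal{M}}^{\alpha,\beta}_g$, so Lemma \ref{le2.3}(i) can be invoked pointwise with the argument $z(t)$.

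The first step is to combine Lemma \ref{le2.1} with Lemma \ref{le2.2}(ii) to obtain
$$\|z(t)\|\,\frac{d\|z(t)\|}{dt} \;=\; \Re\left\langle \frac{dz(t)}{dt},\,z(t)\right\rangle \;=\; -\Re\bigl\langle e^{-\sqrt{-1}\beta}h(z(t)),\,z(t)\bigr\rangle,$$
and then to feed in the two-sided bound \eqref{eq2.1}--\eqref{eq2.2} with $\|z\|$ replaced by $\|z(t)\|$. This produces a differential sandwich of the form $-\mathcal{B}_2(\|z(t)\|) \leq \|z(t)\|\tfrac{d\|z(t)\|}{dt}\leq -\mathcal{B}_1(\|z(t)\|)$.

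Next, I would reparametrize the orbit by $\rho=\|z(t)\|$, which is strictly monotone in $t$, and use Lemma \ref{le2.2}(iii) in the form $\frac{d}{dt}\log\|F(z(t))\| = -\cos\beta$. Dividing by $d\rho/dt$ and subtracting $1/\rho$ converts the previous estimate into a sandwich for $\frac{d}{d\rho}\log\frac{\|F(z(t))\|}{\rho}$. A direct substitution of the explicit expressions for $\mathcal{B}_1,\mathcal{B}_2$ causes the factor $(1-\alpha)\cos\beta\,\rho^2$ to cancel, leaving the right-hand sides
$$\frac{1}{\rho}\left[\frac{1}{(1-\alpha)\bigl(\min\{g(\rho),g(-\rho)\}+\frac{\alpha}{1-\alpha}\bigr)}-1\right] \quad\text{and}\quad \frac{1}{\rho}\left[\frac{1}{(1-\alpha)\bigl(\max\{g(\rho),g(-\rho)\}+\frac{\alpha}{1-\alpha}\bigr)}-1\right],$$
which match the integrands defining $\Phi_1$ and $\Phi_2$ exactly.

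The final step is to integrate from $\rho=0$ to $\rho=\|z\|$, using $\lim_{t\to\infty}\|F(z(t))\|/\|z(t)\|=1$ from Lemma \ref{le2.2}(iii) as the boundary condition at $\rho=0$ to fix the constant, and then to exponentiate. The case $z=0$ is separate but trivial from $F(0)=0$ and $J_F(0)=I_n$. The main obstacle I anticipate is bookkeeping in the reparametrization: keeping the signs straight (so that the outer $\min$/$\max$ end up on the correct side of the inequality), verifying the a.e.\ differentiability of $\|z(t)\|$ needed to apply Lemma \ref{le2.1}, and carrying out the algebraic simplification cleanly so that the integrand lands on precisely the expressions in $\Phi_1$ and $\Phi_2$.
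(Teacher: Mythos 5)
Your proposal is correct and follows essentially the same route as the paper: the orbit $z(t)=F^{-1}\bigl(\exp(-e^{-\sqrt{-1}\beta}t)F(z)\bigr)$, the identity from Lemma \ref{le2.1} combined with Lemma \ref{le2.2}(ii), the two-sided bound from Lemma \ref{le2.3}(i) applied at $z(t)$, the use of $\frac{d}{dt}\log\|F(z(t))\|=-\cos\beta$ to convert the sandwich into one for $\log\frac{\|F(z(t))\|}{\|z(t)\|}$, and the limit $\|F(z(t))\|/\|z(t)\|\to 1$ to fix the constant. The only (cosmetic) difference is that you reparametrize by $\rho=\|z(t)\|$ before integrating, whereas the paper integrates in $t$ over $[0,T]$ and then changes variables, letting $T\to\infty$ at the end.
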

\begin{proof}
Since $F\in \widehat{\mathcal {S}}_g^{\alpha, \beta}(\mathbb{B}^n)$, by Lemma \ref{le2.3} we have
\begin{equation}\label{eq2.8}
\mathcal {B}_1\leqslant \Re \big\langle e^{-\sqrt{-1}\beta}[J_F(z)]^{-1}F(z), z\big\rangle\leqslant
\mathcal {B}_2
\end{equation}
for all $z\in \mathbb{B}^n$, where $\mathcal {B}_1$ and $\mathcal {B}_2$ are defined by \eqref{eq2.1} and \eqref{eq2.2} respectively. Fix $z\in \mathbb{B}^n\setminus\{0\}$, let
$z(t)=F^{-1}\big(\exp(-e^{-\sqrt{-1} \beta}t)F(z)\big) \; (0\leqslant t<+\infty).$  According to (i) of Lemma \ref{le2.2}, we have that $\|z(t)\|$ is strictly increasing on  $[0,+\infty)$. Hence, $\|z(t)\|$ is differentiable on $[0,+\infty)$ almost everywhere. Denote $$\mathfrak{M}_1=\max\{g(\|z(t)\|),g(-\|z(t)\|)\}+\frac{\alpha}{1-\alpha}$$ and $$\mathfrak{M}_2=\min\{g(\|z(t)\|),g(-\|z(t)\|)\}+\frac{\alpha}{1-\alpha}.$$
From Lemma \ref{le2.1}, \ref{le2.2} and \eqref{eq2.8}, we get
$$-(1-\alpha)\cos\beta\|z(t)\|\cdot\mathfrak{M}_1\leqslant{\frac{d\|z(t)\|}{dt}}\leqslant
-(1-\alpha)\|z(t)\|\cos\beta\cdot\mathfrak{M}_2.$$
According to (iii) of Lemma \ref{le2.2}, we have
\begin{align*}
\frac{1}{\|F(z(t)\|}.\frac{d\|F(z(t))\|}{dt}&(1-\alpha)\|z(t)\|\cdot\mathfrak{M}_1\leqslant
\frac{d\|z(t)\|}{dt}\\&\leqslant\frac{1}{\|F(z(t)\|}.\frac{d\|F(z(t))\|}{dt}(1-\alpha)\|z(t)\|\cdot\mathfrak{M}_2.
\end{align*}
For any $T>0$, integrating both sides of the above inequalities with respect to $t$, we obtain
\begin{align}\label{eq2.9}
\int_0^T\frac{1}{(1-\alpha)\|z(t)\|
\mathfrak{M}_2}&\frac{d\|z(t)\|}{dt}dt\leqslant\int_0^T\frac{1}{\|F(z(t)\|}.\frac{d\|F(z(t))\|}{dt}dt\notag\\
&\leqslant\int_0^T\frac{1}{(1-\alpha)\|z(t)\|\mathfrak{M}_1}\frac{d\|z(t)\|}{dt}dt.
\end{align}
Making a chang of variable in \eqref{eq2.9}, we have
\begin{align*}
&\int_{\|z(T)\|}^{\|z\|}\frac{1}{(1-\alpha)y\bigg(\max\{g(y),g(-y)\}+\frac{\alpha}{1-\alpha}\bigg)}dy\\
&\leqslant\int_{\|F(z(T))\|}^{\|F(z)\|}\frac{1}{w}.dw\leqslant\int_{\|z(T)\|}^{\|z\|}\frac{1}{(1-\alpha)y\bigg(\min\{g(y),g(-y)\}+\frac{\alpha}{1-\alpha}
\bigg)}dy.
\end{align*}
It is elementary to verify
\begin{equation}\label{eq2.10}
\mathfrak{M}_3\leqslant\log\frac{\|F(z)\|}{\|F(z(T)\|}\leqslant\mathfrak{M}_4,
\end{equation}
where
$$\mathfrak{M}_3=\int_{\|z(T)\|}^{\|z\|}\bigg(\frac{1}{(1-\alpha)\big(\max\{g(y),g(-y)\}+\frac{\alpha}{1-\alpha}\big)}-1\bigg)\frac{1}{y}dy
+\log\frac{\|z\|}{\|z(T)\|}$$ and
$$\mathfrak{M}_4=\int_{\|z(T)\|}^{\|z\|}\bigg(\frac{1}{(1-\alpha)\big(\min\{g(y),g(-y)\}+\frac{\alpha}{1-\alpha}\big)}-1\bigg)\frac{1}{y}dy
+\log\frac{\|z\|}{\|z(T)\|}.$$
By letting $T\rightarrow+\infty$ in the above inequality \eqref{eq2.10} and using Lemma \ref{le2.2}, we finish the proof.
\end{proof}

\begin{rem}
(i)\,\,In the case of $\beta=0$, Theorem \ref{th2.4} was obtained by Zhang \cite{XLT3}.\\
(ii)\,\,In the case of $\alpha=0$, Theorem \ref{th2.4} was obtained by Zhang \cite{XLT3}, and
Xu-Liu \cite{XLT2} obtained the corresponding result in the complex Banach space.\\
(iii)\,\,In the case of $\alpha=0,\,\, \beta=0$,  Theorem \ref{th2.4} implies the results related $g$-starlike mappings on $\mathbb{B}^n$.
Hamada-Honda-Kohr \cite{HH2} obtained the corresponding result in the complex Banach space.\\
\end{rem}

\begin{cor}\label{co2.6}
Let $-1\leqslant A<B\leqslant1, \alpha\in [0,1), \beta\in(-\frac{\pi}{2}, \frac{\pi}{2}), T=A-A\alpha+B\alpha$ and $g(\xi)=\frac{1+A\xi}{1+B\xi}, \xi\in \mathbb{D}$.
If $F\in \widehat{\mathcal {S}}_g^{\alpha, \beta}(\mathbb{B}^n)$, then
\begin{align*}
\|F(z)\|&\leqslant
\begin{cases}
\|z\|[1+(A-A\alpha+B\alpha)\|z\|]^{\frac{(B-A)(1-\alpha)}{A-A\alpha+B\alpha}},&T\neq0,\\
\|z\|e^{(B-A)(1-\alpha)\|z\|},&T=0.
\end{cases}
\end{align*}
and
\begin{align*}
\|F(z)\|&\geqslant
\begin{cases}
\|z\|[1+(A\alpha-A-B\alpha)\|z\|]^{\frac{(A-B)(1-\alpha)}{A\alpha-B\alpha-A}},&T\neq0,\\
\|z\|e^{(A-B)(1-\alpha)\|z\|},&T=0.
\end{cases}
\end{align*}
\end{cor}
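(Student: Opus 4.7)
\textbf{Proof proposal for Corollary \ref{co2.6}.} The plan is to specialize Theorem \ref{th2.4} to $g(\xi)=\frac{1+A\xi}{1+B\xi}$ and evaluate the two integrals appearing in $\Phi_1,\Phi_2$ explicitly. The first step is to identify which of $g(y)$ and $g(-y)$ is the min and which is the max for $y\in(0,\|z\|)\subset(0,1)$. A short computation gives
$$g(y)-g(-y)=\frac{2(A-B)y}{1-B^{2}y^{2}},$$
and since $A<B$ and $1-B^{2}y^{2}>0$ on $(0,1)$, we conclude $g(y)<g(-y)$. Thus $\min\{g(y),g(-y)\}=g(y)$ and $\max\{g(y),g(-y)\}=g(-y)$, so in Theorem \ref{th2.4} the upper bound $\Phi_{2}$ involves $g(y)$ and the lower bound $\Phi_{1}$ involves $g(-y)$.

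Next I would carry out the algebraic simplification that underlies the whole corollary. Writing
$$(1-\alpha)\Big(g(y)+\tfrac{\alpha}{1-\alpha}\Big)=\frac{(1-\alpha)(1+Ay)+\alpha(1+By)}{1+By}=\frac{1+Ty}{1+By},$$
with $T=A-A\alpha+B\alpha$, one obtains
$$\bigg(\frac{1}{(1-\alpha)\bigl(g(y)+\tfrac{\alpha}{1-\alpha}\bigr)}-1\bigg)\frac{1}{y}=\frac{B-T}{1+Ty}=\frac{(B-A)(1-\alpha)}{1+Ty},$$
so the apparent singularity at $y=0$ cancels. The parallel computation with $g(-y)=\frac{1-Ay}{1-By}$ yields
$$(1-\alpha)\Big(g(-y)+\tfrac{\alpha}{1-\alpha}\Big)=\frac{1-Ty}{1-By},$$
and hence the integrand in $\Phi_{1}$ equals $\dfrac{(A-B)(1-\alpha)}{1-Ty}$.

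The final step is to integrate. When $T\neq0$, straightforward antidifferentiation gives
$$\int_{0}^{\|z\|}\frac{(B-A)(1-\alpha)}{1+Ty}\,dy=\frac{(B-A)(1-\alpha)}{T}\log(1+T\|z\|),$$
which yields the stated upper bound $\Phi_{2}=\|z\|(1+T\|z\|)^{(B-A)(1-\alpha)/T}$; the analogous integral for $\Phi_{1}$ produces a factor $\log(1-T\|z\|)$ with exponent $\frac{(A-B)(1-\alpha)}{-T}=\frac{(B-A)(1-\alpha)}{T}$, which after rewriting $-T=A\alpha-A-B\alpha$ agrees with the lower bound in the statement. When $T=0$, both integrands reduce to the constants $\pm(B-A)(1-\alpha)$, producing the exponential forms $\|z\|e^{\pm(B-A)(1-\alpha)\|z\|}$.

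I do not expect any serious obstacle; the one thing that requires care is bookkeeping the signs and the identification $-T=A\alpha-A-B\alpha$ so that my closed-form answer literally matches the form written in the corollary, and treating $T=0$ as a limiting case (which is automatic since $\frac{1}{T}\log(1+T\|z\|)\to\|z\|$ as $T\to0$).
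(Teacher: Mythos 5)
Your proposal is correct and follows essentially the same route as the paper: specialize Theorem \ref{th2.4} to $g(\xi)=\frac{1+A\xi}{1+B\xi}$, note that $A<B$ forces $\min\{g(y),g(-y)\}=g(y)$, simplify the integrand to $\frac{(B-A)(1-\alpha)}{1+Ty}$ (resp.\ $\frac{(A-B)(1-\alpha)}{1-Ty}$), and integrate, splitting the cases $T\neq0$ and $T=0$. The paper's proof is just a more condensed version of this same computation (it only writes out the upper bound and states that the lower bound is similar), so no discrepancy to report.
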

\begin{proof}
Since $F\in \widehat{\mathcal {S}}_g^{\alpha, \beta}(\mathbb{B}^n)$ with $g(\zeta)=\frac{1+A\zeta}{1+B\zeta}$,
Theorem \ref{th2.4} implies
\begin{align*}
\|F(z)\|&\leqslant
\|z\|\exp\int_{0}^{\|z\|}\bigg(\frac{1}{(1-\alpha)(\min\{g(y),g(-y)\}+\frac{\alpha}{1-\alpha})}-1\bigg)\frac{1}{y}dy\notag\\
&=\|z\|\exp\int_{0}^{\|z\|}\frac{(B-A)(1-\alpha)}{1+(A-A\alpha+B\alpha)x}dx\notag\\
&=
\begin{cases}
\|z\|[1+(A-A\alpha+B\alpha)\|z\|]^{\frac{(B-A)(1-\alpha)}{A-A\alpha+B\alpha}},&A-A\alpha+B\alpha\neq0,\\
\|z\|e^{(B-A)(1-\alpha)\|z\|},&A-A\alpha+B\alpha=0.
\end{cases}
\end{align*}
By the similar computations, we can obtain the lower bound also.
\end{proof}
\begin{cor}\label{co2.7}
Suppose that $g$ satisfies the conditions of Definition \ref{de1.1} and $h(z)=(h_1,h_2)\in \widetilde{\mathcal {M}}^{\alpha,\beta}_g$, $\alpha\in [0,1),\,\,
\beta\in(-\frac{\pi}{2}, \frac{\pi}{2})$,
$z=(z_1,z_2)\in \mathbb{B}^2$.
 Then
\begin{equation}\label{eq2.11}
|q^1_{0,2}|=\bigg|\frac{1}{2}\frac{\partial^2h_1}{\partial z^2_2}(0)\bigg|\leqslant \frac{3\sqrt{3}}{2}\min\{a_1,a_2\},
\end{equation}
where
\begin{equation}\label{eq2.12}
a_1=\inf\limits_{r\in(0,1)}\bigg\{\frac{1-(1-\alpha)\big(\min\{g(r),g(-r)\}+\frac{\alpha}{1-\alpha}\big)}{r}\bigg\}
\end{equation}
 and
 \begin{equation}\label{eq2.13}
 a_2=\inf\limits_{r\in(0,1)}\bigg\{\frac{(1-\alpha)\big(\max\{g(\|z\|),g(-\|z\|)\}+\frac{\alpha}{1-\alpha}\big)-1}{r}\bigg\}.
 \end{equation}
\end{cor}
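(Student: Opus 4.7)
The plan is to isolate the coefficient $q^1_{0,2}$ from the pairing $\langle h(z), z\rangle$ by Fourier extraction on the bidisc torus, and then apply the class-defining bound from Lemma~\ref{le2.3}(i). First I would expand $h = z + H_2 + H_3 + \cdots$ in homogeneous polynomials, so that $\langle h(z),z\rangle = \|z\|^2 + \sum_{k\geq 2}\langle H_k(z),z\rangle$ decomposes into monomials $q^j_{k_1, k_2}\,z_1^{k_1}z_2^{k_2}\bar{z}_j$. For $z = (\rho_1 e^{i\phi_1}, \rho_2 e^{i\phi_2})$, each such monomial has Fourier frequency $(k_1 - \delta_{j,1}, k_2 - \delta_{j,2})$ in $(\phi_1,\phi_2)$, whose coordinate sum is $k-1$; hence the frequency $(-1, 2)$ is produced by exactly one monomial---the one with $j = 1$, $k = 2$, $k_1 = 0$, $k_2 = 2$---namely $q^1_{0,2}\,z_2^2\bar{z}_1$. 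Consequently
\[
\frac{1}{(2\pi)^2}\iint_{[0,2\pi]^2}\langle h(z),z\rangle\,e^{i(\phi_1 - 2\phi_2)}\,d\phi_1\,d\phi_2 \;=\; q^1_{0,2}\,\rho_1\rho_2^2,
\]
and the conjugate frequency $(1, -2)$ likewise contributes nothing, so the same integral with $\Re[e^{-i\beta}\langle h(z),z\rangle]$ in place of $\langle h(z),z\rangle$ equals $\tfrac12 e^{-i\beta} q^1_{0,2}\rho_1\rho_2^2$.

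Next I would plug in Lemma~\ref{le2.3}(i). Setting $\|z\| = r$, that lemma gives $\mathcal{B}_1(r) \leq \Re[e^{-i\beta}\langle h(z),z\rangle] \leq \mathcal{B}_2(r)$ with $\mathcal{B}_2(r) - r^2\cos\beta = \cos\beta\,r^2(1-\alpha)(\max\{g(r),g(-r)\}-1)$ and $r^2\cos\beta - \mathcal{B}_1(r) = \cos\beta\,r^2(1-\alpha)(1-\min\{g(r),g(-r)\})$. Since the mode $(-1,2)$ is nonzero, shifting the integrand by the constant $\mathcal{B}_2(r)$ (respectively $\mathcal{B}_1(r)$) does not alter the Fourier extraction but makes the shifted integrand one-signed with known $L^1$-mean. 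Combined with the triangle inequality $\bigl|\iint W\,e^{i(\cdots)}\bigr| \leq \iint |W|$, this yields two estimates of the form
\[
|q^1_{0,2}|\,\rho_1\rho_2^2 \;\leq\; C\,r^2(1-\alpha)(\max\{g(r),g(-r)\}-1)\quad\text{and}\quad |q^1_{0,2}|\,\rho_1\rho_2^2 \;\leq\; C\,r^2(1-\alpha)(1-\min\{g(r),g(-r)\}),
\]
where the constant $C$ records the real-part factor and the bound $\cos\beta \leq 1$.

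Finally, writing $\rho_j = r s_j$ with $s_1^2 + s_2^2 = 1$ and $s_j \geq 0$, we have $\rho_1\rho_2^2 = r^3 s_1 s_2^2$; a Lagrange multiplier computation (critical point $s_2^2 = 2/3$) gives $\max s_1 s_2^2 = \tfrac{2}{3\sqrt 3}$, attained at $s_1 = 1/\sqrt 3, s_2 = \sqrt{2/3}$, whose reciprocal is the factor $\tfrac{3\sqrt 3}{2}$ appearing in the conclusion. Dividing the two inequalities by $r^3 s_1 s_2^2$ at its maximum and taking the infimum over $r\in(0,1)$ converts the quotients $(1-\alpha)(\max\{g(r),g(-r)\}-1)/r$ and $(1-\alpha)(1-\min\{g(r),g(-r)\})/r$ into $a_2$ and $a_1$, respectively; taking the minimum of the two resulting bounds yields the claim. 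The main technical obstacle I expect is the careful bookkeeping of the constants (the factor $\tfrac12$ from the real-part identity, the factor from the $L^1$-bound of the shifted integrand, and the $s_1 s_2^2$ optimization) so that exactly the constant $\tfrac{3\sqrt 3}{2}$ emerges; the vanishing of the conjugate Fourier mode $(1,-2)$, verified above by the combinatorial count, is the decisive input that prevents additional loss when passing through the real part.
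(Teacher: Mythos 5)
Your overall strategy --- isolate $q^1_{0,2}$ by harmonic analysis on the torus $|z_1|=\rho_1$, $|z_2|=\rho_2$, feed in the two-sided bound of Lemma \ref{le2.3}(i), and optimize $s_1s_2^2$ to produce $\frac{3\sqrt3}{2}$ --- is the right skeleton (the paper's own proof is a one-line citation of Bracci's argument in \cite{BGK}), and your frequency count showing that the mode $(-1,2)$ is produced only by $q^1_{0,2}z_2^2\bar z_1$ is correct. But the constant does not come out: as described, your method proves $|q^1_{0,2}|\leqslant 3\sqrt3\,\min\{a_1,a_2\}$, a factor of $2$ weaker than the statement. Concretely, the $(-1,2)$-Fourier coefficient of the real-valued function $\Re[e^{-\sqrt{-1}\beta}\langle h(z),z\rangle]$ is $\frac12 e^{-\sqrt{-1}\beta}q^1_{0,2}\rho_1\rho_2^2$ (your factor $\frac12$), while the $L^1$ bound on the shifted one-signed integrand gives its mean, namely $\mathcal B_2-r^2\cos\beta=\cos\beta\,r^2(1-\alpha)\big(\max\{g(r),g(-r)\}-1\big)$; combining, $|q^1_{0,2}|\rho_1\rho_2^2\leqslant 2r^2(1-\alpha)(\max\{g(r),g(-r)\}-1)$, and dividing by $\rho_1\rho_2^2=\frac{2}{3\sqrt3}r^3$ yields $3\sqrt3\,a_2$, not $\frac{3\sqrt3}{2}a_2$. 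This loss is intrinsic to extracting a single Fourier mode of a real-valued function over the full torus and then using the triangle inequality: $|\widehat W(\xi)|\leqslant\widehat W(0)$ for $W\geqslant0$ is sharp, so the $\frac12$ coming from the real part cannot be recovered afterwards. The constant matters here: Remark 2.8 needs exactly $\frac{3\sqrt3}{2}a_0$ so that the shearing $h^{[c]}$ lands back in $\mathcal M_g$.

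The missing idea (Bracci's, in the reference the paper invokes) is to average over the one-parameter subtorus $\phi_1=2\phi_2-c$ rather than over the whole torus. On this subtorus the monomial $z_2^2\bar z_1$ is constant, equal to $\rho_1\rho_2^2e^{\sqrt{-1}c}$, so together with its conjugate it contributes $\Re[e^{-\sqrt{-1}\beta}e^{\sqrt{-1}c}q^1_{0,2}]\,\rho_1\rho_2^2$ to the average of $\Re[e^{-\sqrt{-1}\beta}\langle h(z),z\rangle]$ --- the full modulus, with no factor $\frac12$, once $c$ is chosen to make this $\pm|q^1_{0,2}|\rho_1\rho_2^2$. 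All other monomials still average to zero: a term $q^j_{k_1,k_2}z_1^{k_1}z_2^{k_2}\bar z_j$ (or its conjugate) survives the subtorus average iff $2k_1+k_2=2\delta_{j,1}+\delta_{j,2}\leqslant 2$, and since $2k_1+k_2\geqslant k_1+k_2=k$ this forces $k\leqslant 2$ and leaves only the linear terms (giving $r^2\cos\beta$) and $q^1_{0,2}z_2^2\bar z_1$ itself. Averaging the two-sided bound of Lemma \ref{le2.3}(i) over the subtorus then gives $\mathcal B_1\leqslant r^2\cos\beta\pm|q^1_{0,2}|\rho_1\rho_2^2\leqslant\mathcal B_2$ for either choice of sign, and the rest of your argument (the $s_1s_2^2$ optimization, $\cos\beta\leqslant1$, and the infimum over $r$) goes through verbatim to yield $\frac{3\sqrt3}{2}\min\{a_1,a_2\}$.
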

\begin{proof}
Using Lemma \ref{le2.3} and taking similar arguments as those in \cite{BGK}, we can obtain the corollary immediately.
\end{proof}
\begin{rem}
\,\,Suppose that $\rho, \sigma\in \mathbb{C}\setminus\{0\}$ and $h\in H(\mathbb{B}^2)$ satisfies that $h(0)=0$ and
$h(z)=(\rho z_1+q_{0,2}^1z_2^2+O(|z_1|^2,|z_1z_2|,|| z||^3), \sigma z_2+O(|| z||^2))$ for $z=(z_1,z_2)\in \mathbb{B}^2$ (see Definition 1.3 in Bracci \cite{BGK} for references).
Then the shearing $h^{[c]}$ of $h$ is given by $h^{[c]}(z)=(\rho z_1+q_{0,2}^1z_2^2, \sigma z_2),z=(z_1,z_2)\in \mathbb{B}^2.$
In fact, if $h=(h_1,h_2)\in \widetilde{\mathcal {M}}^{\alpha,\beta}_g$, then, using the \eqref{eq2.11}, we have
\begin{equation}\label{eq2.14}
\bigg|\frac{1}{\parallel z\parallel^2}\langle h^{[c]}(z), z\rangle-1\bigg|<a_0=\min\{a_1,a_2\}, z=(z_1,z_2)\in \mathbb{B}^2\setminus \{0\},
\end{equation}
where $a_1$ and $a_2$ are defined by \eqref{eq2.12} and \eqref{eq2.13} respectively. Setting $\mathbb{D}(1,a_0)=\{z:|z-1|<a_0\}$ and if $\mathbb{D}(1,a_0)\subseteq g(\mathbb{D})$,
then $h^{[c]}\in \mathcal {M}_g$ by \eqref{eq2.14}. Graham-Hamada-Kohr-Kohr \cite{GHK10} relies on the shearing process
(see \cite{BGK}) to prove that the mapping
$F(z)=\big(z_1+\frac{3\sqrt{3}}{2}a_0z_2^2,z_2\big)\in \mathcal {S}_g^0(\mathbb{B}^2)$ and $F\in$ supp $\mathcal {S}_g^0(\mathbb{B}^2)$.
This constructs an example of a bounded starlike mapping in $\mathcal {S}_g^0(\mathbb{B}^2)$ which is a support point.
\end{rem}
\section{Distortion theorems associated with $\widehat{\mathcal {S}}_g^{\alpha, \beta}(\mathbb{B}^n).$}\label{sec3}
Establishing various versions of
the boundary Schwarz lemma has attracted the attention of many mathematicians (see, e.g. Krantz \cite{ks}, Liu-Wang-Tang \cite{lw}, Tu-Zhang \cite{TZ}). In this section, we discuss the distortion theorems
for the subclass $\widehat{\mathcal {S}}_g^{\alpha, \beta}(\mathbb{B}^n)$ of normalized biholomorphic mappings on $\mathbb{B}^n$ by using
a new type of the boundary Schwarz lemma in Liu-Wang-Tang \cite{lw}.
\begin{defn}
Let $z\in\partial \mathbb{B}^n(0, \|z\|)$.
The holomorphic tangent space \\$T_{z}^{(1,0)}(\partial \mathbb{B}^n(0, \|z\|))$ to $\partial \mathbb{B}^n(0, \|z\|)$ at $z$ is defined by
$$T_{z}^{(1,0)}(\partial \mathbb{B}^n(0, \|z\|))=\{w\in \mathbb{C}^n: \overline{z}'w=0\}.$$
If $z\in \mathbb{B}^n$, then $z_0=\frac{z}{\|z\|}\in \partial \mathbb{B}^n$. Thus, it is easy to see
$$T_{z_0}^{(1,0)}(\partial \mathbb{B}^n)=T_{z}^{(1,0)}(\partial \mathbb{B}^n(0, \|z\|)).$$
\end{defn}

\begin{lem}{\rm $($Theorem 3.2 in \cite{lw}$)$}\label{le3.2}
Let $f : \mathbb{B}^n\rightarrow \mathbb{B}^n$  be a holomorphic mapping. If $f$ is holomorphic at $z_0\in\partial \mathbb{B}^n,\,\, f(z_0)=w_0\in\partial \mathbb{B}^n$ and $z_0\neq w_0$, then the following statements hold.\\
{\rm (1)}\,\,There is $\lambda\in\mathbb{R}$ such that $\overline{J_f(z_0)}'w_0=\lambda z_0$ and $\lambda>0$.\\
{\rm (2)}\,\,$\|J_f(z_0)\delta\|\leqslant \sqrt{\lambda}\|\delta\|$ for any $\delta\in T_{z_0}^{(1,0)}(\partial \mathbb{B}^n)\cap \partial \mathbb{B}^n.$\\
{\rm (3)}\,\,$|\det J_f(z_0)|\leqslant \lambda^{\frac{n+1}{2}}$.
\end{lem}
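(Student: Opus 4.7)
The statement is a boundary Schwarz-type lemma on the unit ball, and I would prove it in three stages: obtain (1) by Hopf's lemma applied to a plurisubharmonic defining function; reduce to the canonical configuration $z_0 = w_0 = e_1$ by independent unitaries on source and target; then derive (2) from Julia's inequality in $\mathbb{B}^n$ applied to an admissible approach tangent to the holomorphic tangent space, after which (3) is pure linear algebra. The analytic input used throughout is that $\phi(z) := \|f(z)\|^2 - 1 = \sum_{j=1}^n |f_j(z)|^2 - 1$ is plurisubharmonic on $\mathbb{B}^n$, satisfies $\phi \leq 0$ with $\phi(z_0) = 0$, and is $C^2$ in a neighborhood of $z_0$ since $f$ is holomorphic there.

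For part (1), $z_0$ is a boundary maximum of $\phi$ and also a local maximum of $\phi$ restricted to $\partial\mathbb{B}^n$, so the tangential part of $(d\phi)(z_0)$ vanishes and $(d\phi)(z_0) = \lambda(d\rho_0)(z_0)$ for some $\lambda \in \mathbb{R}$, where $\rho_0(z) := \|z\|^2 - 1$; the Hopf boundary-point lemma for nonconstant plurisubharmonic functions on the strongly pseudoconvex $\mathbb{B}^n$ then gives $\lambda > 0$. Direct Wirtinger computation yields $\partial_{\bar z_k}\phi(z_0) = (\overline{J_f(z_0)}'w_0)_k$ and $\partial_{\bar z_k}\rho_0(z_0) = (z_0)_k$, so the identity $(d\phi)(z_0) = \lambda(d\rho_0)(z_0)$ delivers $\overline{J_f(z_0)}'w_0 = \lambda z_0$ with $\lambda > 0$, proving (1).

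For (2) and (3), I would normalize by unitaries $U_1, U_2 \in U(n)$ with $U_1 z_0 = U_2 w_0 = e_1$ and replace $f$ by $F := U_2 \circ f \circ U_1^{-1}$; then $F(e_1) = e_1$, a chain-rule check shows the constant $\lambda$ is preserved, and the inequalities of (2)--(3) for $f$ are equivalent to those for $F$. Applying (1) to $F$ forces the first row of $J_F(e_1)$ to equal $(\lambda, 0, \ldots, 0)$, so $J_F(e_1)$ is block lower-triangular with $(1,1)$-entry $\lambda$ and tangential block $B \in \mathbb{C}^{(n-1)\times(n-1)}$, and $J_F(e_1)(0,\tilde\delta) = (0, B\tilde\delta)$. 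To prove (2), feed the admissible approach $z = (1-\varepsilon^2)e_1 + \varepsilon(0,\tilde\delta)$ with $\|\tilde\delta\| = 1$ into Julia's inequality in $\mathbb{B}^n$,
\begin{equation*}
\frac{|1 - \langle F(z), e_1\rangle|^2}{1 - \|F(z)\|^2} \leq \lambda\,\frac{|1 - \langle z, e_1\rangle|^2}{1 - \|z\|^2},
\end{equation*}
expand both sides to order $\varepsilon^2$, use a phase rotation $\tilde\delta \mapsto e^{\sqrt{-1}\theta}\tilde\delta$ to kill a possibly obstructive holomorphic quadratic term of $F_1$, and let $\varepsilon \to 0^+$; the result is $\|B\tilde\delta\|^2 \leq \lambda$, which is (2). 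Finally (3) is immediate from (2): every singular value of $B$ is $\leq \sqrt\lambda$, so $|\det B| \leq \lambda^{(n-1)/2}$ and $|\det J_f(z_0)| = |\det J_F(e_1)| = \lambda\,|\det B| \leq \lambda^{(n+1)/2}$ (the first equality because $U_1, U_2$ are unitary).

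The main obstacle is (2): obtaining the sharp constant $\sqrt\lambda$ (and not, for instance, $\sqrt{2\lambda}$ from a naive second-order expansion of $1 - \|F\|^2 \geq 0$) rests on Julia's inequality being the correct pluripotential-theoretic comparison function, together with the phase-rotation trick that eliminates the potential holomorphic quadratic obstruction. Julia's inequality itself I would isolate as a preliminary lemma, proving it by restricting $\langle F(\cdot), e_1\rangle$ to a suitable complex line, applying the classical one-variable Julia lemma, and identifying $\lambda$ with the Julia--Carath\'eodory number $\liminf_{z \to e_1}(1-\|F(z)\|)/(1-\|z\|)$, which another brief argument shows equals the $\lambda$ of part (1).
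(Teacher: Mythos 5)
The paper offers no proof of this lemma: it is imported verbatim as Theorem 3.2 of Liu--Wang--Tang \cite{lw}, so there is no in-paper argument to compare against and your proposal must stand on its own. Most of it does. Part (1) via the Hopf lemma applied to the subharmonic function $\phi=\|f\|^2-1$ (boundary maximum at $z_0$, tangential differential vanishes, outward normal derivative strictly positive, Wirtinger identification $\partial_{\bar z_k}\phi(z_0)=(\overline{J_f(z_0)}'w_0)_k$) is correct and complete; the hypothesis $z_0\neq w_0$ is never needed, which is consistent since it only affects the sharper lower bounds on $\lambda$ in the fixed-point case. The unitary reduction preserves $\lambda$, the block lower-triangular form of $J_F(e_1)$ is right, and (3) follows from (2) by singular values exactly as you say. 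For (2), the parabolic curve $z=(1-\varepsilon^2)e_1+\varepsilon(0,\tilde\delta)$ is the correct scaling: carrying out the expansion, with $q(\tilde\delta)$ the holomorphic quadratic term of $F_1$ in the tangential variables, Julia's inequality gives in the limit $|\lambda-q|^2\leqslant\lambda\bigl(2\lambda-2\Re q-\|B\tilde\delta\|^2\bigr)$, which simplifies to $\lambda\|B\tilde\delta\|^2+|q(\tilde\delta)|^2\leqslant\lambda^2$. So you get $\|B\tilde\delta\|^2\leqslant\lambda$ directly; the quadratic term enters with a favorable sign and the phase-rotation trick is not actually needed.

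The one genuine gap is in your proposed proof of the auxiliary Julia inequality itself. Restricting $\langle F(\cdot),e_1\rangle$ to a complex line and applying the one-variable Julia lemma yields an inequality whose left-hand denominator is $1-|\langle F(z),e_1\rangle|^2$, not $1-\|F(z)\|^2$. Since $1-|\langle F(z),e_1\rangle|^2\geqslant 1-\|F(z)\|^2$, the scalar version bounds a \emph{smaller} quantity than the one you need, and it is not equivalent: if you rerun the $\varepsilon^2$-expansion with the scalar denominator, the term $\|B\tilde\delta\|^2$ disappears and you only recover $|q(\tilde\delta)|\leqslant\lambda$, i.e.\ no information about the tangential block at all. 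The statement you actually need is the genuine ball-to-ball Julia lemma (Rudin, \emph{Function Theory in the Unit Ball of} $\mathbb{C}^n$, Section 8.5), whose proof goes through the M\"obius automorphisms $\varphi_a$ of $\mathbb{B}^n$ and the invariant Schwarz--Pick lemma rather than one-variable slicing; the identification of the Julia--Carath\'eodory number with $\lambda$ then follows from the radial limit using part (1), as you indicate. With that lemma properly sourced or proved, the argument closes, and it is in the same analytic spirit (Hopf plus Julia--Wolff--Carath\'eodory machinery) as the proof in \cite{lw}.
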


\begin{thm}\label{th3.3}
Let $-1\leqslant A<B\leqslant1,\,\, \alpha\in [0,1),\,\, \beta\in(-\frac{\pi}{2}, \frac{\pi}{2})$ and $g(\xi)=\frac{1+A\xi}{1+B\xi}, \xi\in \mathbb{D}$. If $F\in
\widehat{\mathcal {S}}_g^{\alpha, \beta}(\mathbb{B}^n)$,
then for every $z\in \mathbb{B}^n\setminus\{0\}$, there exists an unit vector $v(z)=\frac{[J_F(z)]^{-1}F(z)}{\|[J_F(z)]^{-1}F(z)\|}$
such that
$$\|J_F(z)v(z)\|\leqslant
\begin{cases}
\frac{[1+(A-A\alpha+B\alpha)\|z\|]^{\frac{(B-A)(1-\alpha)}{A-A\alpha+B\alpha}}}{\big(\frac{1+A\|z\|}{1+B\|z\|}-\frac{1}{1-\alpha}\sqrt{\alpha^2+\tan^{2}\beta}\big)
(1-\alpha)\cos\beta},&A-A\alpha+B\alpha\neq0,\\
\frac{\|z\|e^{(B-A)(1-\alpha)}}{\big(\frac{1+A\|z\|}{1+B\|z\|}-\frac{1}{1-\alpha}\sqrt{\alpha^2+\tan^{2}\beta}\big)
(1-\alpha)\cos\beta},&A-A\alpha+B\alpha=0.
\end{cases}$$
\end{thm}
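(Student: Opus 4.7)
The plan is to realize $\|J_F(z)v(z)\|$ as a ratio $\dfrac{\|F(z)\|}{\|[J_F(z)]^{-1}F(z)\|}$, then control the numerator from above via the growth theorem (Corollary~\ref{co2.6}) and the denominator from below via Lemma~\ref{le2.3}(ii) combined with Cauchy--Schwarz. Interestingly, although Section~\ref{sec3} is advertised as using the new boundary Schwarz lemma (Lemma~\ref{le3.2}), this particular Fr\'echet-derivative estimate does not seem to require it: the definition of $v(z)$ arranges a direct algebraic cancellation.

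First I would set $h(z)=[J_F(z)]^{-1}F(z)$; since $F\in\widehat{\mathcal{S}}_g^{\alpha,\beta}(\mathbb{B}^n)$, Definition~\ref{de1.2} gives $h\in\widetilde{\mathcal{M}}_g^{\alpha,\beta}$. For $z\in\mathbb{B}^n\setminus\{0\}$ the vector $h(z)$ is nonzero (otherwise $F(z)=0$ would force $z=0$ by the growth lower bound), so $v(z)=h(z)/\|h(z)\|$ is well defined. The key identity is
\begin{equation*}
J_F(z)v(z)=\frac{J_F(z)[J_F(z)]^{-1}F(z)}{\|[J_F(z)]^{-1}F(z)\|}=\frac{F(z)}{\|h(z)\|},
\end{equation*}
so that $\|J_F(z)v(z)\|=\|F(z)\|/\|h(z)\|$.

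Next I would bound the numerator from above using Corollary~\ref{co2.6} with $g(\xi)=\frac{1+A\xi}{1+B\xi}$: when $T=A-A\alpha+B\alpha\neq 0$ this yields $\|F(z)\|\leqslant\|z\|[1+T\|z\|]^{(B-A)(1-\alpha)/T}$, and when $T=0$ the bound $\|F(z)\|\leqslant\|z\|e^{(B-A)(1-\alpha)\|z\|}$. For the denominator, applying Lemma~\ref{le2.3}(ii) to $h$ yields $|\langle h(z),z\rangle|\geqslant\mathcal{B}_3$ with $\mathcal{B}_3$ given in~\eqref{eq2.3}; by the Cauchy--Schwarz inequality $|\langle h(z),z\rangle|\leqslant\|h(z)\|\,\|z\|$, hence
\begin{equation*}
\|h(z)\|\geqslant\frac{\mathcal{B}_3}{\|z\|}=\Bigl(\tfrac{1+A\|z\|}{1+B\|z\|}-\tfrac{1}{1-\alpha}\sqrt{\alpha^2+\tan^{2}\beta}\Bigr)(1-\alpha)\cos\beta\,\|z\|.
\end{equation*}

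Dividing the two estimates and cancelling one factor of $\|z\|$ against the one in the lower bound for $\|h(z)\|$ produces exactly the quoted upper bound in both cases $T\neq 0$ and $T=0$. The only genuine points to verify are that $\mathcal{B}_3>0$ in the relevant range (so that the Cauchy--Schwarz step gives a nontrivial lower bound and so $v(z)$ is well defined), and that the case split on $T$ is handled consistently; neither appears to present a real obstacle, and the entire argument is essentially a combination of Corollary~\ref{co2.6} with Lemma~\ref{le2.3}(ii) through the algebraic identity for $J_F(z)v(z)$.
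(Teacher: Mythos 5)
Your proposal is correct and follows essentially the same route as the paper: the paper likewise writes $\|J_F(z)v(z)\|=\|F(z)\|/\|[J_F(z)]^{-1}F(z)\|$, bounds the numerator by Corollary~\ref{co2.6} and the denominator from below via Lemma~\ref{le2.3}(ii) together with Cauchy--Schwarz, and indeed never invokes the boundary Schwarz lemma in this particular proof. Your clean cancellation of $\|z\|$ in the $T=0$ case in fact gives $e^{(B-A)(1-\alpha)\|z\|}/\digamma_1$, which exposes a small typo in the stated bound of the theorem, but the argument itself matches the paper's.
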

\begin{proof}
Since $F\in
\widehat{\mathcal {S}}_g^{\alpha, \beta}(\mathbb{B}^n)$, then, by taking $h(z)= [J_F(z)]^{-1}F(z)$ in Lemma \ref{le2.3}, we have
\begin{align}\label{eq3.1}
\bigg(\frac{1+A\|z\|}{1+B\|z\|}&-\frac{1}{1-\alpha}\sqrt{\alpha^2+\tan^{2}\beta}\bigg)(1-\alpha)\cos\beta\|z\|^2\notag\\&\leqslant
|\langle [J_F(z)]^{-1}F(z), z\rangle|\leqslant \|z\|\|[J_F(z)]^{-1}F(z)\|.
\end{align}
Hence, \eqref{eq3.1} implies
\begin{equation}\label{eq3.2}
\|[J_F(z)]^{-1}F(z)\|\geqslant \bigg(\frac{1+A\|z\|}{1+B\|z\|}-\frac{1}{1-\alpha}\sqrt{\alpha^2+\tan^{2}\beta}\bigg)(1-\alpha)\cos\beta\|z\|.
\end{equation}
Fix $z\in \mathbb{B}^n\setminus\{0\}$ and let $v(z)=\frac{[J_F(z)]^{-1}F(z)}{\|[J_F(z)]^{-1}F(z)\|}$.
Then
\begin{equation}\label{eq3.3}
F(z)=J_F(z)[J_F(z)]^{-1}F(z)=\|[J_F(z)]^{-1}F(z)\|J_F(z)v(z).
\end{equation}
Rewrite \eqref{eq3.3} as
\begin{equation}\label{eq3.4}
\|J_F(z)v(z)\|=\frac{\|F(z)\|}{\|[J_F(z)]^{-1}F(z)\|}.
\end{equation}
By Corollary \ref{co2.6},  \eqref{eq3.2} and \eqref{eq3.4}, we get the desired results. This completes the
proof.
\end{proof}

\begin{thm}\label{th3.4}
Let $-1\leqslant A<B\leqslant1,\,\, \alpha\in [0,1),\,\, \beta\in(-\frac{\pi}{2}, \frac{\pi}{2})$ and $g(\zeta)=\frac{1+A\zeta}{1+B\zeta}, \zeta\in \mathbb{D}$.
Suppose $f\in
\widehat{\mathcal {S}}_g^{\alpha, \beta}(\mathbb{B}^n)$. We have the following estimates.\\
{\rm (1)}\,\,If $z\in \mathbb{B}^n\backslash\{0\}$ satisfies $\|f(z)\|=\max\limits_{\|\xi\|=\|z\|}\|f(\xi)\|$, then we have
\begin{equation*}
|\det J_f(z)|\leqslant
\begin{cases}
[1+(A-A\alpha+B\alpha)\|z\|]^{\frac{n(B-A)(1-\alpha)}{A-A\alpha+B\alpha}}\bigg(\frac{1}{\digamma_1}\bigg)^{\frac{n+1}{2}},&T\neq0,\\
e^{n(B-A)(1-\alpha)\|z\|}\bigg(\frac{1}{\digamma_1}\bigg)^{\frac{n+1}{2}},&T=0
\end{cases}
\end{equation*}
and
\begin{equation*}
\|J_f(z)\delta\|\leqslant
 \begin{cases}
[1+(A-A\alpha+B\alpha)\|z\|]^{\frac{(B-A)(1-\alpha)}{A-A\alpha+B\alpha}}\bigg(\frac{1}{\digamma_1}\bigg)^{\frac{1}{2}}\|\delta\|,&T\neq0,\\
e^{(B-A)(1-\alpha)\|z\|}\bigg(\frac{1}{\digamma_1}\bigg)^{\frac{1}{2}}\|\delta\|,&T=0.
\end{cases}
\end{equation*}
{\rm (2)}\,\,If $z\in\mathbb{B}^n\backslash\{0\}$ satisfies $\|f(z)\|=\min\limits_{\|\xi\|=\|z\|}\|f(\xi)\|$, then we have
\begin{align*}
&|\det J_f(z)|\geqslant
\begin{cases}
[1+(A\alpha-A-B\alpha)\|z\|]^{\frac{n(A-B)(1-\alpha)}{A\alpha-B\alpha-A}}\digamma_2^{-\frac{n+1}{2}},&T\neq0,\\
e^{n(A-B)(1-\alpha)\|z\|}\digamma_2^{-\frac{n+1}{2}},&T=0
\end{cases}
\end{align*}
and
\begin{align*}
\|J_f(z)\delta\|
\geqslant
\begin{cases}
[1+(A\alpha-A-B\alpha)\|z\|]^{\frac{(A-B)(1-\alpha)}{A\alpha-B\alpha-A}}\digamma_2^{-\frac{1}{2}}\|\delta\|,&T\neq0,\\
e^{(A-B)(1-\alpha)\|z\|}\digamma_2^{-\frac{1}{2}}\|\delta\|,&T=0.
\end{cases}
\end{align*}
In the above {\rm (1)} and {\rm (2)}: $\delta\in T_z^{(1,0)}(\partial \mathbb{B}^n(0,\|z\|)), \,\,T=A-A\alpha+B\alpha,\,\, \digamma_1=\big(\frac{1+A\|z\|}{1+B\|z\|}-\frac{1}{1-\alpha}\sqrt{\alpha^2+\tan^{2}\beta}\big)(1-\alpha)\cos\beta,\,\,
\digamma_2= \bigg(\frac{1-A\|z\|}{1-B\|z\|}+\frac{1}{1-\alpha}\sqrt{\alpha^2+\tan^{2}\beta}\bigg)$\\$(1-\alpha)\cos\beta.$
\end{thm}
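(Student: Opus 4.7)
The plan is to reduce each distortion inequality to an application of Lemma~\ref{le3.2} (the boundary Schwarz lemma) on a carefully rescaled holomorphic self-map of $\mathbb{B}^n$ built from $f$, and then to identify the multiplier produced by that lemma via Lemma~\ref{le2.3} together with Corollary~\ref{co2.6}.

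For part~(1), fix $z_0 \in \mathbb{B}^n \setminus \{0\}$ with $r = \|z_0\|$ realizing $M(r) := \max_{\|\xi\|=r}\|f(\xi)\| = \|f(z_0)\|$, and set
\begin{equation*}
\Phi(\xi) = \frac{f(r\xi)}{M(r)}, \qquad \xi \in \mathbb{B}^n.
\end{equation*}
Since $\|f\|^{2}$ is plurisubharmonic, the maximum principle forces $\Phi(\mathbb{B}^n) \subset \mathbb{B}^n$. With $\xi_0 = z_0/r \in \partial \mathbb{B}^n$, $w_0 = f(z_0)/M(r) \in \partial \mathbb{B}^n$ and $\Phi(\xi_0) = w_0$, Lemma~\ref{le3.2} furnishes $\lambda > 0$ with $\overline{J_\Phi(\xi_0)}'\, w_0 = \lambda \xi_0$, $|\det J_\Phi(\xi_0)| \leqslant \lambda^{(n+1)/2}$, and $\|J_\Phi(\xi_0)\delta\| \leqslant \sqrt{\lambda}\,\|\delta\|$ for $\delta \in T^{(1,0)}_{\xi_0}(\partial \mathbb{B}^n)$. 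Substituting $J_\Phi(\xi_0) = (r/M(r))\, J_f(z_0)$ rewrites the first identity as $\overline{J_f(z_0)}'\, f(z_0) = (\lambda M(r)^{2}/r^{2})\, z_0$; taking the inner product of both sides with $[J_f(z_0)]^{-1} f(z_0)$ collapses the left-hand side to $\|f(z_0)\|^{2} = M(r)^{2}$ and produces
\begin{equation*}
\big\langle [J_f(z_0)]^{-1} f(z_0),\, z_0 \big\rangle = \frac{r^{2}}{\lambda},
\end{equation*}
a positive real number. Applying Lemma~\ref{le2.3}(ii) to $h = [J_f(z_0)]^{-1} f(z_0) \in \widetilde{\mathcal{M}}^{\alpha,\beta}_g$ yields $r^{2}/\lambda \geqslant \mathcal{B}_3 = r^{2}\digamma_1$, i.e.\ $\lambda \leqslant 1/\digamma_1$. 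Then the identities $|\det J_f(z_0)| = (M(r)/r)^{n}\, |\det J_\Phi(\xi_0)|$ and $\|J_f(z_0)\delta\| = (M(r)/r)\,\|J_\Phi(\xi_0)\delta\|$, combined with the upper estimate of Corollary~\ref{co2.6}, deliver both bounds in~(1); note $T^{(1,0)}_{\xi_0}(\partial \mathbb{B}^n) = T^{(1,0)}_{z_0}(\partial \mathbb{B}^n(0,r))$ since $\xi_0$ and $z_0$ are positive scalar multiples.

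Part~(2) runs the symmetric argument on the inverse. With $m(r) := \min_{\|\xi\|=r}\|f(\xi)\| = \|f(z_0)\|$, define $\Psi(\xi) = f^{-1}(m(r)\xi)/r$; this is a holomorphic self-map of $\mathbb{B}^n$ provided $\mathbb{B}^n(0, m(r)) \subset f(\mathbb{B}^n(0,r))$. At $\xi_0 = f(z_0)/m(r)$ one has $\Psi(\xi_0) = z_0/r \in \partial \mathbb{B}^n$ and $J_\Psi(\xi_0) = (m(r)/r)[J_f(z_0)]^{-1}$; Lemma~\ref{le3.2} produces $\mu > 0$ with $\overline{J_\Psi(\xi_0)}'\,(z_0/r) = \mu \xi_0$, and left-multiplying by $\overline{J_f(z_0)}'$ (using $(J_f^{-1})^{*} J_f^{*} = I$) converts this to $\overline{J_f(z_0)}'\, f(z_0) = (m(r)^{2}/(\mu r^{2}))\, z_0$. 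The same pairing trick now yields $\langle [J_f(z_0)]^{-1} f(z_0),\, z_0\rangle = \mu r^{2}$, so $\mu \leqslant \mathcal{B}_4/r^{2} = \digamma_2$ by Lemma~\ref{le2.3}(ii). Because $\overline{J_f(z_0)}'\, f(z_0)$ is a positive real multiple of $z_0$, the linear bijection $\eta \mapsto J_f(z_0)\eta$ carries $T^{(1,0)}_{z_0}(\partial \mathbb{B}^n(0,r))$ onto $T^{(1,0)}_{\xi_0}(\partial \mathbb{B}^n)$; inverting the Jacobian bounds from Lemma~\ref{le3.2} for $\Psi$ and invoking the lower growth estimate of Corollary~\ref{co2.6} then produces the bounds in~(2).

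The principal technical obstacle is the geometric inclusion $\mathbb{B}^n(0, m(r)) \subset f(\mathbb{B}^n(0,r))$ required to define $\Psi$: for the $g$-starlike case $\alpha = \beta = 0$ it is immediate from the starlikeness of $f(\mathbb{B}^n(0,r))$, while in the general setting one exploits the spirallike/almost-starlike structure of $f(\mathbb{B}^n)$ encoded in the flow of Lemma~\ref{le2.2}. A minor secondary point is the degenerate case in which $\xi_0$ coincides with its image under $\Phi$ or $\Psi$ in Lemma~\ref{le3.2}—equivalently, when $f(z_0)$ is a positive real multiple of $z_0$—and this can be handled by a direct continuity/perturbation argument along the ray through $z_0$.
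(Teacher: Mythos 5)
Your proposal is correct and follows essentially the same route as the paper: rescale $f$ to a holomorphic self-map of $\mathbb{B}^n$ (resp.\ its inverse), apply the boundary Schwarz lemma of Lemma~\ref{le3.2}, identify the multiplier $\lambda$ (resp.\ $\mu$) with $\|z\|^2/|\overline{z}'[J_f(z)]^{-1}f(z)|$ (resp.\ its reciprocal) and bound it by Lemma~\ref{le2.3}(ii), then convert back via Corollary~\ref{co2.6}. The only remark worth making is that the inclusion $\mathbb{B}^n(0,m(r))\subset f(\mathbb{B}^n(0,r))$ you flag as the principal obstacle is automatic for any biholomorphic $f$ with $f(0)=0$ (the connected set $\mathbb{B}^n(0,m(r))$ contains $0$ and avoids $\partial f(\mathbb{B}^n(0,r))=f(\partial\mathbb{B}^n(0,r))$), so no starlikeness or flow argument is needed there.
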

\begin{proof} (1)
Without loss of generality, we may assume that $z\neq0.$ Let $M=\|f(z)\|=\max\limits_{\|\xi\|=r}\|f(\xi)\|$ and $\|z\|= r\in(0, 1)$.
Take
\begin{equation}\label{eq3.5}
g(w)=\frac{f(rw)}{M}, \,\,w\in \mathbb{B}^n.
\end{equation}
Then $g: \mathbb{B}^n\rightarrow\mathbb{B}^n, g(0)=0$ and $g$ is biholomorphic in a neighborhood of $\overline{\mathbb{B}^n}$. Let
$z_0=\frac{z}{r}$. Then $w_0=g(z_0)=\frac{f(z)}{M}.$
Thus, it follows $z_0,\,\, w_0\in \partial \mathbb{B}^n$ and
\begin{equation}\label{eq3.6}
J_g(z_0)=\frac{r}{M}J_f(rz_0)=\frac{r}{M}J_f(z).
\end{equation}
By Lemma \ref{le3.2}, there is
$\lambda\in \mathbb{R}$ such that $\overline{J_g(z_0)}'w_0=\lambda z_0$ and
\begin{equation}\label{eq3.7}
\lambda=\overline{w_0}'J_g(z_0)z_0=\frac{\overline{f(z)}'J_f(z)z}{M^2}.
\end{equation}
From the \eqref{eq3.6} and \eqref{eq3.7}, we have that $\overline{w_0}'=\lambda \overline{z_0}'[J_g(z_0)]^{-1}$ and
\begin{equation}\label{eq3.8}
\frac{\overline{f(z)}'}{M}=\frac{\lambda M}{r^2}\overline{z}'[J_f(z)]^{-1}.
\end{equation}
This means that $\overline{f(z)}'$
and
$\overline{z}'[J_f(z)]^{-1}$ have the same direction. Since $f\in
\widehat{\mathcal {S}}_g^{\alpha, \beta}(\mathbb{B}^n)$, using Lemma \ref{le2.3} and \eqref{eq3.7}, we get
\begin{align}\label{eq3.9}
\lambda&=\frac{\overline{f(z)}'J_f(z)z}{M^2}=\frac{\overline{f(z)}'}{\|f(z)\|}\frac{J_f(z)z}{\|f(z)\|}
=\frac{\overline{z}'[J_f(z)]^{-1}}{\|\overline{z}'[J_f(z)]^{-1}\|}\frac{J_f(z)z}{\|f(z)\|}
\notag\\&=\frac{\|z\|^2}{|\overline{z}'[J_f(z)]^{-1}f(z)|}
\leqslant \frac{1}{\big(\frac{1+A\|z\|}{1+B\|z\|}-\frac{1}{1-\alpha}\sqrt{\alpha^2+\tan^{2}\beta}\big)(1-\alpha)\cos\beta}.
\end{align}
Denote by $\digamma_1=\big(\frac{1+A\|z\|}{1+B\|z\|}-\frac{1}{1-\alpha}\sqrt{\alpha^2+\tan^{2}\beta}\big)(1-\alpha)\cos\beta.$
Therefore, by Lemma \ref{le3.2} and \eqref{eq3.9}, we have
\begin{equation}\label{eq3.10}
|\det J_g(z_0)|\leqslant \lambda^{\frac{n+1}{2}}\leqslant \bigg(\frac{1}{\digamma_1}\bigg)^{\frac{n+1}{2}}
\end{equation}
and
\begin{equation}\label{eq3.11}
\|\det J_g(z_0)\delta\|\leqslant \sqrt{\lambda}\|\delta\|\leqslant\bigg(\frac{1}{\digamma_1}\bigg)^{\frac{1}{2}}\|\delta\|, \,\,\forall \delta\in T_{z_0}^{(0,1)}(\partial \mathbb{B}^n).
\end{equation}
From \eqref{eq3.6}, we have
$J_g(z_0)=\frac{r}{M}J_f(z).$  Moreover, it is easy to see $T_{z_0}^{(0,1)}(\partial \mathbb{B}^n)=T_{z}^{(1,0)}(\partial \mathbb{B}^n(0, \|z\|))$.
Hence, by Corollary \ref{co2.6}, we obtain
\begin{align*}
&|\det J_f(z)|=(\frac{M}{r})^n|\det J_g(z_0)|\leqslant (\frac{\|f(z)\|}{\|z\|})^n\bigg(\frac{1}{\digamma_1}\bigg)^{\frac{n+1}{2}}&\notag\\
&\leqslant
\begin{cases}
[1+(A-A\alpha+B\alpha)\|z\|]^{\frac{n(B-A)(1-\alpha)}{A-A\alpha+B\alpha}}\bigg(\frac{1}{\digamma_1}\bigg)^{\frac{n+1}{2}},&A-A\alpha+B\alpha\neq0,\\
e^{n(B-A)(1-\alpha)\|z\|}\bigg(\frac{1}{\digamma_1}\bigg)^{\frac{n+1}{2}},&A-A\alpha+B\alpha=0
\end{cases}
\end{align*}
and for $\forall \delta\in T_{z}^{(1,0)}(\partial \mathbb{B}^n(0, \|z\|)), $
\begin{align*}
&\|J_f(z)\delta\|=\frac{M}{r} \|J_g(z_0)\delta\|\\
&\leqslant
\begin{cases}
[1+(A-A\alpha+B\alpha)\|z\|]^{\frac{(B-A)(1-\alpha)}{A-A\alpha+B\alpha}}\bigg(\frac{1}{\digamma_1}\bigg)^{\frac{1}{2}}\|\delta\|,&A-A\alpha+B\alpha\neq0,\\
e^{(B-A)(1-\alpha)\|z\|}\bigg(\frac{1}{\digamma_1}\bigg)^{\frac{1}{2}}\|\delta\|,&A-A\alpha+B\alpha=0.
\end{cases}
\end{align*}
(2)
Let $m=\|f(z)\|=\max\limits_{\|\xi\|=r}\|f(\xi)\|$ and $\|z\|= r\in(0, 1)$.
Take
\begin{equation}\label{eq3.12}
h(w)=\frac{f(rw)}{m}, \,\,w\in \mathbb{B}^n.
\end{equation}
Then $h(0)=0$ and $h$ is biholomorphic in a neighborhood of $\overline{\mathbb{B}^n}$ with $h(\mathbb{B}^n)\supset \mathbb{B}^n$. Let $z_0=\frac{z}{r}$. Then $w_0=h(z_0)=\frac{f(z)}{m}.$
Thus, it follows $z_0,\,\, w_0\in \partial \mathbb{B}^n$ and $J_h(z_0)=\frac{r}{m}J_f(rz_0)=\frac{r}{m}J_f(z).$
Furthermore, we have that $h^{-1}: \mathbb{B}^n\rightarrow\mathbb{B}^n, h^{-1}(0)=0$ and $h^{-1}$ is biholomorphic in a neighborhood of $\overline{\mathbb{B}^n}$
with $h^{-1}(w_0)=z_0$.
By the similar proof as in (1), we also conclude that $\overline{f(z)}'$
and
$\overline{z}'[J_f(z)]^{-1}$ have the same direction. Since $f\in
\widehat{\mathcal {S}}_g^{\alpha, \beta}(\mathbb{B}^n)$, using Lemma \ref{le2.3} and Lemma \ref{le3.2}, there is
$\lambda\in \mathbb{R}$ such that
\begin{align}\label{eq3.13}
\lambda&=\overline{z_0}'J_{h^{-1}}(w_0)w_0=\overline{z_0}'[J_h(z_0)]^{-1}w_0=\frac{\overline{z}'}{r}\big[\frac{r}{m}J_f(z)\big]^{-1}\frac{f(z)}{m}\notag\\&=
\frac{\overline{z}'}{\|z\|}\frac{\|f(z)\|}{\|z\|}[J_f(z)]^{-1}\frac{f(z)}{\|f(z)\|}
=\frac{\overline{z}'[J_f(z)]^{-1}f(z)}{\|z\|^2}\notag\\
&\leqslant \bigg(\frac{1-A\|z\|}{1-B\|z\|}+\frac{1}{1-\alpha}\sqrt{\alpha^2+\tan^{2}\beta}\bigg)(1-\alpha)\cos\beta.
\end{align}
Denote by $\digamma_2= \bigg(\frac{1-A\|z\|}{1-B\|z\|}+\frac{1}{1-\alpha}\sqrt{\alpha^2+\tan^{2}\beta}\bigg)(1-\alpha)\cos\beta.$ Thus,
using Lemma \ref{le3.2} and \eqref{eq3.13}, we have
\begin{equation}\label{eq3.14}
|\det J_{h^{-1}}(w_0)|=\frac{1}{|\det J_h(z_0)|}\leqslant\lambda^{\frac{n+1}{2}}\leqslant \digamma_2^{\frac{n+1}{2}}
\end{equation}
and
\begin{equation}\label{eq3.15}
\|J_{h^{-1}}(w_0)\widehat{\delta}\|\leqslant \sqrt{\lambda}\|\widehat{\delta}\|\leqslant \digamma_2^{\frac{1}{2}}\|\widehat{\delta}\|, \,\,\forall \widehat{\delta}\in T_{w_0}^{(0,1)}(\partial \mathbb{B}^n).
\end{equation}
Note
$J_h(z_0)=\frac{r}{m}J_f(z).$ Moreover, it is easy to see $$T_{z_0}^{(0,1)}(\partial \mathbb{B}^n)=T_{z}^{(1,0)}(\partial \mathbb{B}^n(0, \|z\|)).$$
Hence, by \eqref{eq3.14}, we can obtain
\begin{align*}
&\frac{1}{|\det J_f(z)|}=\big(\frac{r}{m}\big)^n\frac{1}{|\det J_h(z_0)|}=\big(\frac{\|z\|}{\|f(z)\|}\big)^n\frac{1}{|\det J_h(z_0)|}\leqslant \big(\frac{\|z\|}{\|f(z)\|}\big)^n\digamma_2^{\frac{n+1}{2}}.
\end{align*}
Combining Corollary \ref{co2.6} and the above result, we have
\begin{align*}
&|\det J_f(z)|\geqslant \big(\frac{\|f(z)\|}{\|z\|}\big)^n\digamma_2^{-\frac{n+1}{2}}\\
&\geqslant
\begin{cases}
[1+(A\alpha-A-B\alpha)\|z\|]^{\frac{n(A-B)(1-\alpha)}{A\alpha-B\alpha-A}}\digamma_2^{-\frac{n+1}{2}},&A-A\alpha+B\alpha\neq0,\\
e^{n(A-B)(1-\alpha)\|z\|}\digamma_2^{-\frac{n+1}{2}},&A-A\alpha+B\alpha=0.
\end{cases}
\end{align*}
Note that $J_f(z)T_z^{(1,0)}(\partial \mathbb{B}^n(0,\|z\|))\subset T_{w_0}^{(1,0)}(\partial \mathbb{B}^n)$ (see the proof of Theorem 3.2 in \cite{lw})
and
\begin{equation}\label{eq3.16}
J_{h^{-1}}(w_0)=\frac{\|f(z)\|}{\|z\|}[J_f(z)]^{-1}.
\end{equation}
When the $\widehat{\delta}$ is replaced by $J_f(z)\delta$ in \eqref{eq3.15}, by using \eqref{eq3.16},  we have
\begin{equation}\label{eq3.17}
\digamma_2^{\frac{1}{2}}\|J_f(z)\delta\|\geqslant\|J_{h^{-1}}(w_0)J_f(z)\delta\|=\frac{\|f(z)\|}{\|z\|}\|\delta\|,\,\,\forall \delta\in T_z^{(1,0)}(\partial \mathbb{B}^n(0,\|z\|)).
\end{equation}
In view of Corollary \ref{co2.6} and \eqref{eq3.17}, we get
\begin{align*}
&\digamma_2^{\frac{1}{2}}\|J_f(z)\delta\|\geqslant\frac{\|f(z)\|}{\|z\|}\|\delta\|\\
&\geqslant
\begin{cases}
[1+(A\alpha-A-B\alpha)\|z\|]^{\frac{(A-B)(1-\alpha)}{A\alpha-B\alpha-A}}\|\delta\|,&A-A\alpha+B\alpha\neq0,\\
e^{(A-B)(1-\alpha)\|z\|}\|\delta\|,&A-A\alpha+B\alpha=0.
\end{cases}
\end{align*}
Thus, for $\forall \delta\in T_z^{(1,0)}(\partial \mathbb{B}^n(0,\|z\|))$, we have
\begin{align*}
&\|J_f(z)\delta\|\geqslant\frac{\|f(z)\|}{\|z\|}\|\delta\|\\
&\geqslant
\begin{cases}
[1+(A\alpha-A-B\alpha)\|z\|]^{\frac{(A-B)(1-\alpha)}{A\alpha-B\alpha-A}}\digamma_2^{-\frac{1}{2}}\|\delta\|,&A-A\alpha+B\alpha\neq0,\\
e^{(A-B)(1-\alpha)\|z\|}\digamma_2^{-\frac{1}{2}}\|\delta\|,&A-A\alpha+B\alpha=0.
\end{cases}
\end{align*}
which gives the desired result.
\end{proof}

When $\alpha=\beta=0$ or $\alpha=\beta=0, A=-1, B=1-2\gamma(0<\gamma<1)$,  Theorem \ref{th3.4} implies the Corollary \ref{co3.5} and Corollary \ref{co3.6} as follows, which are related the $g$-starlike mappings and starlike mappings of order $\gamma$ ($0<\gamma<1$) on $\mathbb{B}^n$.

\begin{cor}\label{co3.5}
Let $-1\leqslant A<B\leqslant1$ and $g(\zeta)=\frac{1+A\zeta}{1+B\zeta}, \zeta\in \mathbb{D}$.
Suppose $f\in
\mathcal {S}_g^*(\mathbb{B}^n)$. We have the following estimates.\\
{\rm (1)}\,\,If $z\in \mathbb{B}^n\backslash\{0\}$ satisfies $\|f(z)\|=\max\limits_{\|\xi\|=\|z\|}\|f(\xi)\|$, then we have
\begin{equation*}
|\det J_f(z)|\leqslant
\begin{cases}
(1+A\|z\|)^{\frac{n(B-A)}{A}}\bigg(\frac{1+B\|z\|}{1+A\|z\|}\bigg)^{\frac{n+1}{2}},&A\neq0,\\
e^{n(B-A)\|z\|}\bigg(\frac{1+B\|z\|}{1+A\|z\|}\bigg)^{\frac{n+1}{2}},&A=0
\end{cases}
\end{equation*}
and
\begin{equation*}
\|J_f(z)\delta\|\leqslant
 \begin{cases}
(1+A\|z\|)^{\frac{(B-A)}{A}}\bigg(\frac{1+B\|z\|}{1+A\|z\|}\bigg)^{\frac{1}{2}}\|\delta\|,&A\neq0,\\
e^{(B-A)\|z\|}\bigg(\frac{1+B\|z\|}{1+A\|z\|}\bigg)^{\frac{1}{2}}\|\delta\|,&A=0.
\end{cases}
\end{equation*}
{\rm (2)}\,\,If $z\in\mathbb{B}^n\backslash\{0\}$ satisfies $\|f(z)\|=\min\limits_{\|\xi\|=\|z\|}\|f(\xi)\|$, then we have
\begin{align*}
&|\det J_f(z)|\geqslant
\begin{cases}
[1-A\|z\|]^{\frac{n(A-B)(1-\alpha)}{-A}}\big(\frac{1-A\|z\|}{1-B\|z\|}\big)^{-\frac{n+1}{2}},&A\neq0,\\
e^{n(A-B)\|z\|}\big(\frac{1-A\|z\|}{1-B\|z\|}\big)^{-\frac{n+1}{2}},&A=0
\end{cases}
\end{align*}
and
\begin{align*}
\|J_f(z)\delta\|
\geqslant
\begin{cases}
(1-A\|z\|)^{\frac{(A-B)}{-A}}\big(\frac{1-A\|z\|}{1-B\|z\|}\big)^{-\frac{1}{2}}\|\delta\|,&A\neq0,\\
e^{(A-B)\|z\|}\big(\frac{1-A\|z\|}{1-B\|z\|}\big)^{-\frac{1}{2}}\|\delta\|,&A=0.
\end{cases}
\end{align*}
In above {\rm (1)} and {\rm (2)}: $\delta\in T_z^{(1,0)}(\partial \mathbb{B}^n(0,\|z\|))$.
\end{cor}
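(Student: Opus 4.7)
The plan is to obtain Corollary \ref{co3.5} as a direct specialization of Theorem \ref{th3.4} at the parameters $\alpha=0$ and $\beta=0$. The first point is to justify that the hypothesis $f\in \mathcal{S}_g^*(\mathbb{B}^n)$ is the same as $f\in \widehat{\mathcal{S}}_g^{0,0}(\mathbb{B}^n)$: this is exactly part (i) of the remark following Definition \ref{de1.2}, where one sees that the defining relation of $\widetilde{\mathcal{M}}_g^{\alpha,\beta}$ collapses to $\langle h(z),z/\|z\|^2\rangle\in g(\mathbb{D})$, so the condition $[J_f(z)]^{-1}f(z)\in \widetilde{\mathcal{M}}_g^{0,0}$ coincides with the $g$-starlikeness condition.

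Next I would substitute $\alpha=0,\beta=0$ into the auxiliary constants appearing in Theorem \ref{th3.4}. The quantity $T=A-A\alpha+B\alpha$ reduces to $T=A$, so the alternative ``$T\neq 0$'' becomes ``$A\neq 0$''. The factor $(1-\alpha)\cos\beta$ equals $1$, and $\frac{1}{1-\alpha}\sqrt{\alpha^2+\tan^2\beta}$ vanishes, so
\[
\digamma_1=\frac{1+A\|z\|}{1+B\|z\|},\qquad \digamma_2=\frac{1-A\|z\|}{1-B\|z\|}.
\]
Hence $1/\digamma_1=(1+B\|z\|)/(1+A\|z\|)$, matching the fractions $\bigl(\tfrac{1+B\|z\|}{1+A\|z\|}\bigr)^{(n+1)/2}$ and $\bigl(\tfrac{1+B\|z\|}{1+A\|z\|}\bigr)^{1/2}$ in the stated upper bounds; similarly $\digamma_2^{-1}$ produces $\bigl(\tfrac{1-A\|z\|}{1-B\|z\|}\bigr)^{-1}$ in the lower bounds. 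The growth exponents simplify analogously: $\tfrac{n(B-A)(1-\alpha)}{A-A\alpha+B\alpha}$ becomes $\tfrac{n(B-A)}{A}$, and $\tfrac{n(A-B)(1-\alpha)}{A\alpha-B\alpha-A}$ becomes $\tfrac{n(A-B)}{-A}$, with the exponential branches $e^{n(B-A)\|z\|}$ and $e^{n(A-B)\|z\|}$ arising in the degenerate case $A=0$.

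With these substitutions, the two bounds in each of parts (1) and (2) of Theorem \ref{th3.4} transform term by term into the four estimates claimed in Corollary \ref{co3.5}. The max-norm case (1) invokes the upper half of Theorem \ref{th3.4}, and the min-norm case (2) invokes its lower half. Since the mapping $f$ is $g$-starlike with $g(\zeta)=\tfrac{1+A\zeta}{1+B\zeta}$, both hypotheses of Theorem \ref{th3.4} are in force, and we therefore read off the result.

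There is no real obstacle here; the only point requiring attention is bookkeeping of the two degenerate cases $A=0$ in the exponents (coming from $T=0$) and the correct pairing of the $\digamma_i$-type factors with the Jacobi-determinant exponent $(n+1)/2$ versus the Fr\'echet-derivative exponent $1/2$. A careful cross-check that $1/\digamma_1$ and $\digamma_2^{-1}$ have been rewritten as the ratios $(1+B\|z\|)/(1+A\|z\|)$ and $(1-A\|z\|)/(1-B\|z\|)$ displayed in the statement suffices to finish the argument.
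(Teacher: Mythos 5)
Your proposal is correct and follows exactly the paper's own route: the authors obtain Corollary \ref{co3.5} precisely by setting $\alpha=\beta=0$ in Theorem \ref{th3.4}, using the identification $\mathcal{S}_g^*(\mathbb{B}^n)=\widehat{\mathcal{S}}_g^{0,0}(\mathbb{B}^n)$ from the remark after Definition \ref{de1.2}. Your explicit simplifications of $T$, $\digamma_1$, $\digamma_2$ and the exponents are the same bookkeeping the paper leaves implicit, and they check out.
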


\begin{cor}\label{co3.6}
Let $f(z)$ be a normalized biholomorphic  starlike mappings of order $\gamma$ {\rm ($0<\gamma<1$)} on $\mathbb{B}^n$. We have the following estimates.\\
{\rm (1)}\,\,If $z\in \mathbb{B}^n\backslash\{0\}$ satisfies $\|f(z)\|=\max\limits_{\|\xi\|=\|z\|}\|f(\xi)\|$, then we have
\begin{equation*}
|\det J_f(z)|\leqslant
(1-\|z\|)^{2(\gamma-1) n}\bigg(\frac{1-\|z\|}{1+(1-2\gamma)\|z\|}\bigg)^{-\frac{n+1}{2}},
\end{equation*}
and
\begin{equation*}
\|J_f(z)\delta\|\leqslant
(1-\|z\|)^{2(\gamma-1)}\bigg(\frac{1-\|z\|}{1+(1-2\gamma)\|z\|}\bigg)^{-\frac{1}{2}}\|\delta\|.
\end{equation*}
{\rm (2)}\,\,If $z\in\mathbb{B}^n\backslash\{0\}$ satisfies $\|f(z)\|=\min\limits_{\|\xi\|=\|z\|}\|f(\xi)\|$, then we have
\begin{align*}
&|\det J_f(z)|\geqslant
(1+\|z\|)^{-2(1-\gamma)n}\bigg(\frac{1+\|z\|}{1-(1-2\gamma)\|z\|}\bigg)^{-\frac{n+1}{2}},
\end{align*}
and
\begin{align*}
\|J_f(z)\delta\|
\geqslant
(1+\|z\|)^{-2(1-\gamma)}\bigg(\frac{1+\|z\|}{1-(1-2\gamma)\|z\|}\bigg)^{-\frac{1}{2}}\|\delta\|.
\end{align*}
In above {\rm (1)} and {\rm (2)}: $\delta\in T_z^{(1,0)}(\partial \mathbb{B}^n(0,\|z\|))$.
\end{cor}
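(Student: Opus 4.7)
The plan is to recognize Corollary \ref{co3.6} as the specialization of Theorem \ref{th3.4} under the parameter choice $\alpha = \beta = 0$, $A = -1$, $B = 1-2\gamma$, and to translate the conclusions of Theorem \ref{th3.4} directly. The first step is to justify the identification of the mapping class: under these parameters, the condition defining $\widetilde{\mathcal{M}}_g^{0,0}$ collapses to $\bar{z}'[J_f(z)]^{-1}f(z)/\|z\|^2 \in g(\mathbb{D})$ for $g(\zeta) = \frac{1-\zeta}{1+(1-2\gamma)\zeta}$, and this $g$ maps $\mathbb{D}$ onto the half-plane $\{w \in \mathbb{C} : \Re w > \gamma\}$. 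Thus $f \in \widehat{\mathcal{S}}_g^{0,0}(\mathbb{B}^n) = \mathcal{S}_g^*(\mathbb{B}^n)$ coincides with the class of normalized biholomorphic starlike mappings of order $\gamma$, by Remark (i) following Definition \ref{de1.2}.

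Next I would evaluate the constants appearing in Theorem \ref{th3.4}. Since $\alpha = \beta = 0$, we have $\sqrt{\alpha^2 + \tan^2\beta} = 0$ and $(1-\alpha)\cos\beta = 1$, so
$$\digamma_1 = \frac{1+A\|z\|}{1+B\|z\|} = \frac{1-\|z\|}{1+(1-2\gamma)\|z\|}, \qquad \digamma_2 = \frac{1-A\|z\|}{1-B\|z\|} = \frac{1+\|z\|}{1-(1-2\gamma)\|z\|}.$$
Moreover $T = A - A\alpha + B\alpha = A = -1 \neq 0$, so we are always in the first branch of the piecewise formulas in Theorem \ref{th3.4}, never the exponential one.

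The third step is routine arithmetic on the exponents. In the upper-bound case, the exponent is
$$\frac{n(B-A)(1-\alpha)}{A - A\alpha + B\alpha} = \frac{n\bigl((1-2\gamma) - (-1)\bigr)}{-1} = -2n(1-\gamma) = 2n(\gamma-1),$$
and the base is $1 + (A - A\alpha + B\alpha)\|z\| = 1 - \|z\|$, producing the factor $(1-\|z\|)^{2(\gamma-1)n}$ in part (1). A parallel computation in the lower-bound case yields
$$\frac{n(A-B)(1-\alpha)}{A\alpha - B\alpha - A} = \frac{n(-2+2\gamma)}{1} = -2n(1-\gamma),$$
with base $1 + (A\alpha - A - B\alpha)\|z\| = 1 + \|z\|$, giving the factor $(1+\|z\|)^{-2(1-\gamma)n}$ in part (2). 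The bounds on $\|J_f(z)\delta\|$ follow with the same substitutions but the exponents divided by $n$ and the $\digamma_i$-factors raised to the power $\pm \tfrac{1}{2}$.

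There is essentially no analytic obstacle here; the only place to be careful is bookkeeping with signs, specifically checking that $B = 1-2\gamma \in (-1,1)$ (which holds since $\gamma \in (0,1)$) so that the hypothesis $-1 \leqslant A < B \leqslant 1$ of Theorem \ref{th3.4} is satisfied, and verifying that the negative exponents $2(\gamma-1)n$ and $-2(1-\gamma)n$ correctly match the forms stated in the corollary. Once these identifications are in place, substituting into Theorem \ref{th3.4}(1) and Theorem \ref{th3.4}(2) yields the four inequalities of Corollary \ref{co3.6}(1) and Corollary \ref{co3.6}(2) respectively.
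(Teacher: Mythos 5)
Your proposal is correct and takes exactly the paper's route: the paper obtains Corollary \ref{co3.6} by specializing Theorem \ref{th3.4} to $\alpha=\beta=0$, $A=-1$, $B=1-2\gamma$, and your evaluation of $T=-1$, of $\digamma_1$, $\digamma_2$, and of the exponents agrees with the stated bounds. One minor slip in your justification of the class identification: for $\gamma\in(0,1)$ the function $g(\zeta)=\frac{1-\zeta}{1+(1-2\gamma)\zeta}$ maps $\mathbb{D}$ onto the disk $\big\{w:\big|w-\tfrac{1}{2\gamma}\big|<\tfrac{1}{2\gamma}\big\}$, not onto the half-plane $\{\Re w>\gamma\}$ (the pole of $g$ lies outside $\overline{\mathbb{D}}$, so the image is bounded); the identification with starlikeness of order $\gamma$ still holds because this disk is precisely the image of that half-plane under $w\mapsto 1/w$, and the quantity $\bar{z}'[J_f(z)]^{-1}f(z)/\|z\|^2$ is the reciprocal of the expression whose real part exceeds $\gamma$ in the usual definition.
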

\begin{rem}
(i)\,\,If we take $A=-1,\,\, B=1$ in Corollary \ref{co3.5}, then
$f$ becomes a normalized biholomorphic starlike mappings on $\mathbb{B}^n$. The corresponding result was partly proved by Liu-Wang-Tang \cite{lw}.\\
(ii)\,\,In particular, when $n=1,\,\, A=-1,\,\, B=1$ in Corollary \ref{co3.5}, the result coincides with the classical distortion theorem for starlike function in one complex variable (see, Theorem A).
\end{rem}

\section{Extreme and support points associated with $\Phi_{n,\widehat{\alpha},\widehat{\beta}}$ and $g$-parametric representation.}\label{sec4}
 Kirwan \cite{KK} and Pell \cite{PR} proved that if $f\in$ ex$\mathcal {S}$ (resp., $f\in$ supp$\mathcal {S}$) and
$f(z, t)$ is a Loewner chain such that $f = f (\cdot, 0)$, then $e^{-t} f (\cdot, t)\in$ ex$\mathcal {S}$ (resp., $e^{-t}f(\cdot, t)\in$ supp$\mathcal {S}$).
Graham-Hamada-Kohr-Kohr \cite{GHK4} proved
that if $f\in$ ex$\mathcal {S}^0(\mathbb{B}^n)$ (resp., $f\in$ supp${\mathcal {S}}^0(\mathbb{B}^n)$) and
$f(z, t)$ is a Loewner chain such that $f = f (\cdot, 0)$ on $\mathbb{B}^n$, then $e^{-t} f (\cdot, t)\in$ ex$\mathcal {S}^0(\mathbb{B}^n)$
for $t\geqslant0$ (resp., $e^{-t}f(\cdot, t)\in$ supp$\mathcal {S}^0(\mathbb{B}^n)$ for $0\leqslant t<t_0$).
Chiril\u{a}-Hamada-Kohr \cite{C4} proved that these results hold for the set of mappings which have $g$-parametric representation.  Furthermore,
Chiril$\breve{a}$ \cite{C2} and Graham-Kohr-Pfaltzgraff \cite{GHK5} proved that Roper-Suffridge extension operator and a general Pfaltzgraff-Suffridge extension operator $\Psi_{n,\widehat{\alpha}}$ preserve these properties with extreme and support points, where $\Psi_{n,\widehat{\alpha}}$ is defined as $\Psi_{n,\widehat{\alpha}}: \mathcal {L}\mathcal {S}_n(\mathbb{B}^n)\rightarrow
\mathcal {L}\mathcal {S}_{n+1}(\mathbb{B}^{n+1}), \widehat{\alpha}\geqslant0, z=(z_1,z_2,...,z_{n+1})\in \mathbb{B}^{n+1},$
$$\Psi_{n,\widehat{\alpha}}(f)(z)=(f(z_1,z_2,...,z_n),z_{n+1}[J_f(z_1,z_2,...,z_n)]^{\widehat{\alpha}}).$$
In fact, if $\widehat{\alpha}=\frac{1}{n+1}$, the $\Psi_{n,\widehat{\alpha}}$ coincides with the Pfaltzgraff-Suffridge extension operator $\Psi_{n}$. In 2014,
Chiril\u{a}-Hamada-Kohr \cite{C4} studied the above problems on extreme and support points with $\Psi_{n}$.

Motivated by the above works, we will prove that this result also holds in the modified Roper-Suffridge extension operator $\Phi_{n,\widehat{\alpha},\widehat{\beta}}$ with the set $\mathcal {S}^0_g(\mathbb{B}^n)$ of mappings which have $g$-parametric representation. Since
$\mathcal {S}^0_g(\mathbb{B}^n)$ is a compact (see, Remark \ref{re1.7}(iii)), and thus a closed subset of $\mathcal {S}(\mathbb{B}^n)$.
Note that the $\Phi_{n,\widehat{\alpha},\widehat{\beta}}: \mathcal {L}\mathcal {S}\rightarrow\mathcal {L}\mathcal {S}(\mathbb{B}^n)$ is different from the general Pfaltzgraff-Suffridge extension operator $\Psi_{n,\widehat{\alpha}}$.

\begin{thm}\label{th4.1}
Let $f\in \overline{\mathcal {S}^0_g(\mathbb{D})}$ and let $F=\Phi_{n,\widehat{\alpha},\widehat{\beta}}(f)$ with $\widehat{\alpha}\in [0, 1]$, $\widehat{\beta}\in [0, \frac{1}{2}]$ and
$\widehat{\alpha}+\widehat{\beta}\leqslant1$.
Let $F_{\widehat{\alpha},\widehat{\beta}}(z,t)$ be the $g$-Loewner
chain given by the below \eqref{eq4.1} such that $F=F_{\widehat{\alpha},\widehat{\beta}}(z,0).$ If $F\in$ {\rm ex}$\Phi_{n,\widehat{\alpha},\widehat{\beta}}\Big(\overline{\mathcal {S}^0_g(\mathbb{D}})\Big)$,
then $e^{-t}F_{\widehat{\alpha},\widehat{\beta}}(\cdot,t)\in$ {\rm ex}$\Phi_{n,\widehat{\alpha},\widehat{\beta}}\Big(\overline{\mathcal {S}^0_g(\mathbb{D})}\Big)$
for $t\geqslant0.$
\end{thm}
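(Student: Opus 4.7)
The plan is to proceed by contrapositive: assume that for some $t_0>0$ the mapping $e^{-t_0}F_{\widehat{\alpha},\widehat{\beta}}(\cdot,t_0)$ fails to be an extreme point of $\Phi_{n,\widehat{\alpha},\widehat{\beta}}(\overline{\mathcal{S}^0_g(\mathbb{D})})$, and derive that $F$ itself cannot be extreme. The key preliminary observation, which follows by absorbing the factor $e^{-t}$ into the defining formula \eqref{eq4.1}, is the identity
\[
e^{-t}F_{\widehat{\alpha},\widehat{\beta}}(z,t) = \Phi_{n,\widehat{\alpha},\widehat{\beta}}\bigl(e^{-t}f(\cdot,t)\bigr)(z),
\]
so that every $t$-slice of the higher-dimensional chain automatically lies in the image class. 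This reduces the problem to transferring a nontrivial convex decomposition at time $t_0$ back to time $0$ along the Loewner flow.

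The first technical step is to identify the transition mapping $V(z,0,t_0)$ of the chain $F_{\widehat{\alpha},\widehat{\beta}}$. Matching the first coordinate of $F_{\widehat{\alpha},\widehat{\beta}}(z,0)=F_{\widehat{\alpha},\widehat{\beta}}(V(z,0,t_0),t_0)$ with the uniqueness of the one-variable transition $v$ forces $V_1(z,0,t_0)=v(z_1,0,t_0)$; matching the remaining coordinates through \eqref{eq4.1} and using the elementary identity $f'(v(z_1,0,t_0),t_0)\,v'(z_1,0,t_0)=f'(z_1,0)$, obtained by differentiating $f(v(z_1,0,t_0),t_0)=f(z_1,0)$ in $z_1$, one finds
\[
V(z,0,t_0)=\Bigl(v(z_1,0,t_0),\;\tilde{z}\,e^{-(1-\widehat{\alpha}-\widehat{\beta})t_0}\bigl(v(z_1,0,t_0)/z_1\bigr)^{\widehat{\alpha}}\bigl(v'(z_1,0,t_0)\bigr)^{\widehat{\beta}}\Bigr).
\]
Now suppose $e^{-t_0}F_{\widehat{\alpha},\widehat{\beta}}(\cdot,t_0)=\lambda\Phi_{n,\widehat{\alpha},\widehat{\beta}}(h_1)+(1-\lambda)\Phi_{n,\widehat{\alpha},\widehat{\beta}}(h_2)$ for some $\lambda\in(0,1)$ and distinct $h_1,h_2\in\overline{\mathcal{S}^0_g(\mathbb{D})}$. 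The subordination relation $F(z)=F_{\widehat{\alpha},\widehat{\beta}}(V(z,0,t_0),t_0)$ then yields
\[
F=\lambda\,\widetilde{G}_1+(1-\lambda)\widetilde{G}_2,\qquad\widetilde{G}_i(z):=e^{t_0}\Phi_{n,\widehat{\alpha},\widehat{\beta}}(h_i)\bigl(V(z,0,t_0)\bigr).
\]

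The main obstacle is to show that each $\widetilde{G}_i$ still lies in $\Phi_{n,\widehat{\alpha},\widehat{\beta}}(\overline{\mathcal{S}^0_g(\mathbb{D})})$. A direct expansion of the product of power terms, using the explicit forms of $\Phi_{n,\widehat{\alpha},\widehat{\beta}}$ and of $V(z,0,t_0)$ above, reorganizes the factors and produces the structural identity
\[
\widetilde{G}_i(z)=\Phi_{n,\widehat{\alpha},\widehat{\beta}}(p_i)(z),\qquad p_i(z_1):=e^{t_0}\,h_i\bigl(v(z_1,0,t_0)\bigr);
\]
the branches match since $p_i(z_1)/z_1\bigr|_{z_1=0}=e^{t_0}h_i'(0)v'(0,0,t_0)=1$ and $p_i'(0)=1$. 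It then remains to verify $p_i\in\overline{\mathcal{S}^0_g(\mathbb{D})}$; this is the one-dimensional Kirwan--Pell type statement for $g$-parametric representation from Chiril\u{a}--Hamada--Kohr \cite{C4} (cf.\ \cite{GHK4}), obtained by splicing the $g$-Loewner chain for $h_i$ onto the transition $v(\cdot,0,t_0)$ after time $t_0$ and invoking the compactness of $\mathcal{S}^0_g(\mathbb{D})$ (Remark \ref{re1.7}(iii)). Injectivity of $v(\cdot,0,t_0)$ combined with $h_1\neq h_2$ forces $p_1\neq p_2$, and the first coordinate of $\Phi_{n,\widehat{\alpha},\widehat{\beta}}$ recovers the underlying one-variable map, hence $\widetilde{G}_1\neq\widetilde{G}_2$; this contradicts $F\in\mathrm{ex}\,\Phi_{n,\widehat{\alpha},\widehat{\beta}}(\overline{\mathcal{S}^0_g(\mathbb{D})})$ and completes the argument. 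The delicate point is precisely the verification that $p_i$ inherits a genuine $g$-parametric representation rather than merely an abstract Loewner chain, which is where the structural compatibility between $\Phi_{n,\widehat{\alpha},\widehat{\beta}}$ and the $g$-Loewner flow, and the hypotheses $\widehat{\alpha}\in[0,1]$, $\widehat{\beta}\in[0,1/2]$, $\widehat{\alpha}+\widehat{\beta}\leqslant 1$, are used.
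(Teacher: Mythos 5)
Your proposal is correct and follows essentially the same route as the paper's proof: the identity $e^{-t}F_{\widehat{\alpha},\widehat{\beta}}(\cdot,t)=\Phi_{n,\widehat{\alpha},\widehat{\beta}}(e^{-t}f(\cdot,t))$, the explicit transition mapping \eqref{eq4.2}, the structural computation $e^{t}\Phi_{n,\widehat{\alpha},\widehat{\beta}}(h)(V(z,t))=\Phi_{n,\widehat{\alpha},\widehat{\beta}}(e^{t}\,h\circ v_t)(z)$ as in \eqref{eq4.5}, and the membership $e^{t}\,h\circ v_t\in\overline{\mathcal{S}^0_g(\mathbb{D})}$ are exactly the paper's steps, merely recast in contrapositive form (and with the non-triviality of the transferred decomposition argued via injectivity of $v_t$ instead of the identity theorem).
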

\begin{proof}Here we use the similar way to those in the proof of Chiril\u{a} \cite{C2}.
Since $f\in \overline{\mathcal {S}^0_g(\mathbb{D})}$, there exists a $g$-Loewner chain $f_t(z_1)=f(z_1,t): \mathbb{D}\times[0,+\infty)\rightarrow \mathbb{C}$ such that
$f=f(\cdot,0)$ and $\{e^{-t}f(\cdot,t)\}$ is a normal family on $\mathbb{D}.$ Set
\begin{equation}\label{eq4.1}
F_{\widehat{\alpha},\widehat{\beta}}(z,t)=\Bigg(f(z_1,t), \tilde{z}e^{(1-\widehat{\alpha}-\widehat{\beta})t}\bigg(\frac{f(z_1,t)}{z_1}\bigg)^{\widehat{\alpha}}(f'(z_1,t))^{\widehat{\beta}}\Bigg),
\end{equation}
$z=(z_1,\tilde{z})\in \mathbb{B}^n, t\geqslant0,$ where the branch of the power function is chosen such that $(\frac{f(z_1)}{z_1})^{\widehat{\alpha}}|_{z_1=0}=1$, $(f'(z_1)^{\widehat{\beta}}|_{z_1=0}=1$.
Then $F_{\widehat{\alpha},\widehat{\beta}}(z,t)$ is a $g$-Loewner chain such that
$F=F_{\widehat{\alpha},\widehat{\beta}}(\cdot,0)$  and $ \{e^{-t}F_{\widehat{\alpha},\widehat{\beta}}(\cdot,t)\}$ is a normal family on $\mathbb{B}^n$ (see Theorem 2.1 in Chiril\u{a} \cite{C1}).
Moreover, it is clear that $e^{-t}F_{\widehat{\alpha},\widehat{\beta}}(\cdot,t)=\Phi_{n,\widehat{\alpha},\widehat{\beta}}(e^{-t}f(\cdot,t))\in \Phi_{n,\widehat{\alpha},\widehat{\beta}}\Big(\overline{\mathcal {S}^0_g(\mathbb{D})}\Big)$
for $t\geqslant0$. Let $v_{s,t}(z_1)=v(z_1,s,t)$ be the transition mapping associated
with $f(z_1,t)$ and let $V(z,s,t)$ be the transition mapping associated
with $F_{\widehat{\alpha},\widehat{\beta}}(z,t)$. By Theorem 3.1 in \cite{C1}, we get
\begin{equation}\label{eq4.2}
V(z,s,t)=\Big(v_{s,t}(z_1),\tilde{z}e^{(s-t)(1-\widehat{\alpha}-\widehat{\beta})}\Big(\frac{v(z_1,s,t)}{z_1}\Big)^{\widehat{\alpha}}(v'(z_1,s,t))^{\widehat{\beta}}\Big),
\end{equation}
where $z=(z_1,\tilde{z})\in \mathbb{B}^n, t\geqslant s\geqslant0,$ the branch of the power function is chosen such that
$(\frac{v(z_1,s,t)}{z_1})^{\widehat{\alpha}}|_{z_1=0}=e^{(s-t)\widehat{\alpha}}$ and $(v'(z_1,s,t))^{\widehat{\alpha}}|_{z_1=0}=e^{(s-t)\widehat{\beta}}.$
\\
Fix $t\geqslant0$. Let
\begin{equation}\label{eq4.3}
e^{-t}F_{\widehat{\alpha},\widehat{\beta}}(z,t)=\lambda M(z)+(1-\lambda)G(z), z\in (z_1,\tilde{z})\in \mathbb{B}^n,
\end{equation}
where $\lambda\in (0,1)$ and $M$, $G\in \Phi_{n,\widehat{\alpha},\widehat{\beta}}\Big(\overline{\mathcal {S}^0_{g}(\mathbb{D})}\Big)$.
Taking $V(z,t)=V(z,0,t)$ for $z\in \mathbb{B}^n$. By \eqref{eq4.3}, we obtain
\begin{align}\label{eq4.4}
F(z)&=F_{\widehat{\alpha},\widehat{\beta}}(z,0)=F_{\widehat{\alpha},\widehat{\beta}}(V(z,t),t)\notag\\&=\lambda e^tM(V(z,t))+(1-\lambda)e^tG(V(z,t))
\end{align}
for $z\in (z_1,z')\in \mathbb{B}^n.$ Let $m$, $g\in \overline{\mathcal {S}^0_g(\mathbb{D})}$ with $M=\Phi_{n,\widehat{\alpha},\widehat{\beta}}(m), G=\Phi_{n,\widehat{\alpha},\widehat{\beta}}(g).$ Also let $v_t(z_1)=v(z_1,t)=v(z_1,0,t).$
It is obvious that
$$e^tM(V(z,t))=\Phi_{n,\widehat{\alpha},\widehat{\beta}}(e^t (m\circ v_t))(z)$$ and
$$e^tG(V(z,t))=\Phi_{n,\widehat{\alpha},\widehat{\beta}}(e^t (g\circ v_t))(z).$$
By \eqref{eq4.2}, we have
\begin{align}\label{eq4.5}
&e^tM(V(z,t))=e^t\Phi_{n,\widehat{\alpha},\widehat{\beta}}(m)(V(z,t))\notag\\&=\Bigg(e^tm(v_t(z_1)),\tilde{z}e^{t(\widehat{\alpha}+\widehat{\beta})}
\bigg(\frac{v_t(z_1)}{z_1}\bigg)^{\widehat{\alpha}}(v'_t(z_1))^{\widehat{\beta}}
\bigg(\frac{m(v_t(z_1))}{v_t(z_1)}\bigg)^{\widehat{\alpha}}(m'(v_t(z_1))^{\widehat{\beta}}\Bigg)\notag\\&=\Bigg(e^tm(v_t(z_1)),\tilde{z}e^{t(\widehat{\alpha}+\widehat{\beta})}
\Bigg(\frac{m(v_t(z_1))}{z_1}\Bigg)^{\widehat{\alpha}}(v'_t(z_1))^{\widehat{\beta}}(m'(v_t(z_1))^{\widehat{\beta}}\Bigg)
\notag\\&=\Bigg(e^t (m\circ v_t)(z_1),\tilde{z}\Bigg(\frac{e^t (m\circ v_t)(z_1)}{z_1}\Bigg)^{\widehat{\alpha}}(e^t (m\circ v_t)'(z_1))^{\widehat{\beta}}\Bigg)
\notag\\&=\Phi_{n,\widehat{\alpha},\widehat{\beta}}(e^t (m\circ v_t))(z).
\end{align}
Similarly, $e^tG(V(z,t))=\Phi_{n,\widehat{\alpha},\widehat{\beta}}(e^t (g\circ v_t))(z)$ for $z\in \mathbb{B}^n$.
Further, since $m\in \overline{\mathcal {S}^0_g(\mathbb{D})},$ it follows that the composition $e^t (m\circ v_t))(z_1)$ is a function in $\overline{\mathcal {S}^0_g(\mathbb{D})}$. Hence
$e^tM(V(z,t))\in \Phi_{n,\widehat{\alpha},\widehat{\beta}}\Big(\overline{\mathcal {S}^0_g(\mathbb{D})}\Big)$. Similarly, $e^tG(V(z,t))\in \Phi_{n,\widehat{\alpha},\widehat{\beta}}\Big(\overline{\mathcal {S}^0_g(\mathbb{D})}\Big)$. From \eqref{eq4.4} and \eqref{eq4.5}, we have
$$F(z)=\lambda \Phi_{n,\widehat{\alpha},\widehat{\beta}}(e^t m\circ v_t)(z)+(1-\lambda)\Phi_{n,\widehat{\alpha},\widehat{\beta}}(e^t g\circ v_t)(z),\; z\in (z_1,z')\in \mathbb{B}^n.$$
Since $F\in$ ex $\Phi_{n,\widehat{\alpha},\widehat{\beta}}\Big(\overline{\mathcal {S}^0_g(\mathbb{D})}\Big)$, we have $$\Phi_{n,\widehat{\alpha},\widehat{\beta}}(e^t m\circ v_t)(z)\equiv\Phi_{n,\widehat{\alpha},\widehat{\beta}}(e^t g\circ v_t)(z)$$ for $z\in \mathbb{B}^n$. Finally, applying the identity theorem for holomorphic mappings, we
get $\Phi_{n,\widehat{\alpha},\widehat{\beta}}(m)\equiv\Phi_{n,\widehat{\alpha},\widehat{\beta}}(g)$, i.e., $M\equiv G.$ This completes the proof.
\end{proof}

We next consider the analog of a result of Pell \cite{PR} and Chiril\u{a} \cite{C2} concerning support
points and Loewner chains associated with the Roper-Suffridge extension operator $\Phi_{n,\widehat{\alpha},\widehat{\beta}}$.
Here we use the similar way to those in the proof of  Chiril\u{a}-Hamada-Kohr \cite{C4}.
\begin{thm}\label{th4.2}
Let $f\in \overline{\mathcal {S}^0_g(\mathbb{D})}$ and $F=\Phi_{n,\widehat{\alpha},\widehat{\beta}}(f).$
Assume $F\in$ {\rm supp}$\Phi_{n,\widehat{\alpha},\widehat{\beta}}(\overline{\mathcal {S}^0_g(\mathbb{D})})$. Then there exists a
 $g$-Loewner
chain $F_{\widehat{\alpha},\widehat{\beta}}(z,t)$ given by \eqref{eq4.1} with $\widehat{\alpha}\in [0, 1]$, $\widehat{\beta}\in [0, \frac{1}{2}]$,
$\widehat{\alpha}+\widehat{\beta}\leqslant1$ and $t_0>0$ such that $e^{-t}F_{\widehat{\alpha},\widehat{\beta}}(\cdot,t)\in$ {\rm supp} $\Phi_{n,\widehat{\alpha},\widehat{\beta}}(\overline{\mathcal {S}^0_g(\mathbb{D})})$
for $0\leqslant t<t_0.$
\end{thm}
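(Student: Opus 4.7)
The plan is to imitate Pell's support-point argument for $\mathcal{S}$, adapted to the $g$-parametric setting via the key identity already established in the proof of Theorem 4.1, namely
\[
e^{t}\,\Phi_{n,\widehat{\alpha},\widehat{\beta}}(h)(V(z,t))=\Phi_{n,\widehat{\alpha},\widehat{\beta}}\bigl(e^{t}(h\circ v_{t})\bigr)(z),\qquad h\in\overline{\mathcal{S}^{0}_{g}(\mathbb{D})},
\]
where $v_{t}(z_{1})=v(z_{1},0,t)$ is the transition mapping of $f$'s $g$-Loewner chain and $V(z,t)=V(z,0,t)$ is the transition of $F_{\widehat{\alpha},\widehat{\beta}}(z,t)$ given by \eqref{eq4.2}. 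Since the composition $e^{t}(h\circ v_{t})$ remains in $\overline{\mathcal{S}^{0}_{g}(\mathbb{D})}$, this identity defines a well-defined continuous self-map
\[
\Theta_{t}\colon\Phi_{n,\widehat{\alpha},\widehat{\beta}}\bigl(\overline{\mathcal{S}^{0}_{g}(\mathbb{D})}\bigr)\longrightarrow\Phi_{n,\widehat{\alpha},\widehat{\beta}}\bigl(\overline{\mathcal{S}^{0}_{g}(\mathbb{D})}\bigr),\qquad \Theta_{t}(H)(z):=e^{t}H(V(z,t)),
\]
whose crucial feature is that $\Theta_{t}\bigl(e^{-t}F_{\widehat{\alpha},\widehat{\beta}}(\cdot,t)\bigr)=F_{\widehat{\alpha},\widehat{\beta}}(V(\cdot,t),t)=F_{\widehat{\alpha},\widehat{\beta}}(\cdot,0)=F$.

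Next, let $L\colon H(\mathbb{B}^{n})\to\mathbb{C}$ be a continuous linear functional witnessing $F\in\mathrm{supp}\,\Phi_{n,\widehat{\alpha},\widehat{\beta}}(\overline{\mathcal{S}^{0}_{g}(\mathbb{D})})$, and define
\[
L_{t}(H):=L\bigl(e^{t}H\circ V(\cdot,t)\bigr),\qquad H\in H(\mathbb{B}^{n}).
\]
Since $V(\cdot,t)\colon\mathbb{B}^{n}\to\mathbb{B}^{n}$ is a fixed holomorphic Schwarz mapping and $L$ is continuous linear on $H(\mathbb{B}^{n})$ in the topology of locally uniform convergence, $L_{t}$ is again a continuous linear functional. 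For any $G\in\Phi_{n,\widehat{\alpha},\widehat{\beta}}(\overline{\mathcal{S}^{0}_{g}(\mathbb{D})})$ one has $\Theta_{t}(G)\in\Phi_{n,\widehat{\alpha},\widehat{\beta}}(\overline{\mathcal{S}^{0}_{g}(\mathbb{D})})$, hence
\[
\Re L_{t}(G)=\Re L(\Theta_{t}(G))\leqslant \Re L(F)=\Re L_{t}\bigl(e^{-t}F_{\widehat{\alpha},\widehat{\beta}}(\cdot,t)\bigr),
\]
the last equality coming from $\Theta_{t}(e^{-t}F_{\widehat{\alpha},\widehat{\beta}}(\cdot,t))=F$. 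This shows that $e^{-t}F_{\widehat{\alpha},\widehat{\beta}}(\cdot,t)$ attains the maximum of $\Re L_{t}$ on the compact set $\Phi_{n,\widehat{\alpha},\widehat{\beta}}(\overline{\mathcal{S}^{0}_{g}(\mathbb{D})})$ for every $t\geqslant 0$.

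The remaining and most delicate point is the requirement that $\Re L_{t}$ be nonconstant on $\Phi_{n,\widehat{\alpha},\widehat{\beta}}(\overline{\mathcal{S}^{0}_{g}(\mathbb{D})})$, and this is precisely what forces the restriction to a small interval $[0,t_{0})$. Since $\Re L$ is nonconstant there, choose $G_{1},G_{2}\in\Phi_{n,\widehat{\alpha},\widehat{\beta}}(\overline{\mathcal{S}^{0}_{g}(\mathbb{D})})$ with $\Re L(G_{1})\neq\Re L(G_{2})$. As $t\to 0^{+}$ we have $V(\cdot,t)\to\mathrm{id}$ locally uniformly on $\mathbb{B}^{n}$, so $e^{t}G_{j}\circ V(\cdot,t)\to G_{j}$ in $H(\mathbb{B}^{n})$ and thus $L_{t}(G_{j})\to L(G_{j})$ by continuity of $L$. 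Consequently there exists $t_{0}>0$ such that $\Re L_{t}(G_{1})\neq\Re L_{t}(G_{2})$ for all $t\in[0,t_{0})$, which gives $e^{-t}F_{\widehat{\alpha},\widehat{\beta}}(\cdot,t)\in\mathrm{supp}\,\Phi_{n,\widehat{\alpha},\widehat{\beta}}(\overline{\mathcal{S}^{0}_{g}(\mathbb{D})})$ on that interval.

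The main obstacle I anticipate is the nonconstancy step: one could easily imagine a pathological $L$ for which $L_{t}$ becomes constant on the image set for every $t>0$, and ruling this out is exactly why the conclusion is local in $t$ rather than global. The continuity argument above handles it cleanly, but relies on the fact that $t\mapsto V(\cdot,t)$ is continuous at $t=0$ in the compact-open topology, which follows from the continuity of the transition mappings of Loewner chains. A secondary technical check is that the parameter restrictions $\widehat{\alpha}\in[0,1]$, $\widehat{\beta}\in[0,1/2]$, $\widehat{\alpha}+\widehat{\beta}\leqslant 1$ enter only through the invocation of \cite{C1} to guarantee that \eqref{eq4.1} and \eqref{eq4.2} indeed yield a $g$-Loewner chain and its transition mapping, exactly as used in the proof of Theorem \ref{th4.1}.
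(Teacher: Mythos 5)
Your proposal is correct and follows essentially the same route as the paper: the same functional $L_{t}(H)=L(e^{t}H\circ V(\cdot,t))$, the same use of the identity \eqref{eq4.5} to keep the image set invariant, and the same continuity-at-$t=0$ argument for nonconstancy (the paper compares $\Re L_{t}(Q)$ against the constant value $\Re L(F)=\Re L_{t}(e^{-t}F_{\widehat{\alpha},\widehat{\beta}}(\cdot,t))$ for a single $Q$, while you compare two points $G_{1},G_{2}$, an immaterial difference).
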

\begin{proof}
Since $f\in \overline{\mathcal {S}^0_g(\mathbb{D})}$ , there exists a Loewner chain $f(z_1,t): \mathbb{D}\times[0,+\infty)\rightarrow \mathbb{C}$ such that
$f=f(\cdot,0)$, $e^{-t}f(\cdot,t)\in \overline{\mathcal {S}^0_g(\mathbb{D})}$   and $\{e^{-t}f(\cdot,t)\}$ is a normal family on $\mathbb{D}$.
Similar to the proof of Theorem \ref{th4.1}, there exist a $g$-Loewner chain $F_{\widehat{\alpha},\widehat{\beta}}(z,t)$ defined as \eqref{eq4.1}
such that
$F=F_{\widehat{\alpha},\widehat{\beta}}(\cdot,0),\,\, \{e^{-t}F_{\widehat{\alpha},\widehat{\beta}}(\cdot,t)\}$ is a normal family on $\mathbb{B}^n.$
Moreover, it is clear  $$e^{-t}F_{\widehat{\alpha},\widehat{\beta}}(\cdot,t)=\Phi_{n,\widehat{\alpha},\widehat{\beta}}(e^{-t}f(\cdot,t))\in \Phi_{n,\widehat{\alpha},\widehat{\beta}}\Big(\overline{\mathcal {S}^0_g(\mathbb{D})}\Big)$$
for $t\geqslant0$.
Let $V(z,s,t)$ be the transition mapping associated
with $F_{\widehat{\alpha},\widehat{\beta}}(z,t)$ and let $V(z,t)=V(z,0,t)$ for $z\in \mathbb{B}^n$.
\\
Since $F\in$ supp $\Phi_{n,\widehat{\alpha},\widehat{\beta}}\Big(\overline{\mathcal {S}^0_g(\mathbb{D})}\Big)$, there exist a continuous linear functional $L$ on $H(\mathbb{B}^n)$ such that
\begin{equation}\label{eq4.6}
\Re L(F)=\max\Bigg\{\Re L(M): M\in \Phi_{n,\widehat{\alpha},\widehat{\beta}}\Big(\overline{\mathcal {S}^0_g(\mathbb{D})}\Big)\Bigg\}
\end{equation}
and $\Re L$ is nonconstant on $\Phi_{n,\widehat{\alpha},\widehat{\beta}}\Big(\overline{\mathcal {S}^0_g(\mathbb{D})}\Big).$
\\
Fix $t\geqslant0.$ Let $L_t: H(\mathbb{B}^n)\rightarrow \mathbb{C}$ be the functional given by
\begin{equation}\label{eq4.7}
L_t(M)=L(e^t M\circ V(z,t)), \,\,M\in H(\mathbb{B}^n),
\end{equation}
where
\begin{equation}\label{eq4.8}
e^t M\circ V_t\in \Phi_{n,\widehat{\alpha},\widehat{\beta}}\Big(\overline{\mathcal {S}^0_g(\mathbb{D})}\Big)
\end{equation}
 for $M\in \Phi_{n,\widehat{\alpha},\widehat{\beta}}\Big(\overline{\mathcal {S}^0_g(\mathbb{D})}\Big)$ by \eqref{eq4.5}.
\\
Then $L_t$ is a continuous linear functional on $H(\mathbb{B}^n)$ and
\begin{equation}\label{eq4.9}
L_t(e^{-t}F_{\widehat{\alpha},\widehat{\beta}}(\cdot,t))=L(F_{\widehat{\alpha},\widehat{\beta}}(V(\cdot,t),t))=L(F_{\widehat{\alpha},\widehat{\beta}}(z,0))=L(F).
\end{equation}
Using \eqref{eq4.6}-\eqref{eq4.9}, then
\begin{equation}\label{eq4.10}
\Re L_t(e^{-t}F_{\widehat{\alpha},\widehat{\beta}}(\cdot,t))=\Re L(F)\geqslant \Re L(e^t M\circ V(z,t))=\Re L_t(M)
\end{equation}
for all $M\in \Phi_{n,\widehat{\alpha},\widehat{\beta}}\Big(\overline{\mathcal {S}^0_g(\mathbb{D})}\Big).$ Hence, \eqref{eq4.10} implies that
\begin{equation}\label{eq4.11}
\Re L_t(e^{-t}F_{\widehat{\alpha},\widehat{\beta}}(\cdot,t))=\max\bigg\{\Re L_t(M): M\in \Phi_{n,\widehat{\alpha},\widehat{\beta}}\Big(\overline{\mathcal {S}^0_g(\mathbb{D})}\Big)\bigg\}.
\end{equation}
 Because $F\in$ supp $\Phi_{n,\widehat{\alpha},\widehat{\beta}}\Big(\overline{\mathcal {S}^0_g(\mathbb{D})}\Big)$, there exists a $Q\in \Phi_{n,\widehat{\alpha},\widehat{\beta}}\Big(\overline{\mathcal {S}^0_g(\mathbb{D})}\Big)$
such that $\Re L(Q)<\Re L(F)$. Since $L_t(Q)\rightarrow L(Q)$ as $t\rightarrow 0^+,$ there exists $t_0>0$ such that
$$\Re L_t(Q)<\Re L(F)=\Re L_t(e^{-t}F_{\widehat{\alpha},\widehat{\beta}}(\cdot,t)),\,\, 0\leqslant t<t_0.$$
Therefore $\Re L_t|_{\Phi_{n,\widehat{\alpha},\widehat{\beta}}}\Big(\overline{\mathcal {S}^0_g(\mathbb{D})}\Big)$ is nonconstant for $0\leqslant t<t_0$.
Finally, in view of \eqref{eq4.11}, the conclusion follows, as desired. This completes the proof.
\end{proof}
\begin{rem}
(i) When $n=2,\,\, \widehat{\alpha}=0,\,\, \widehat{\beta}=\frac{1}{2}$ and $g(\xi)=\frac{1-\xi}{1+\xi},\,\, \xi\in\mathbb{D}$, Theorem \ref{th4.1} and Theorem \ref{th4.2} were proved by Graham-Kohr-Pfaltzgraff \cite{GHK5}.\\
(ii) In the case of $\widehat{\alpha}=0,\,\, \widehat{\beta}=\frac{1}{2}$ and $g(\xi)=\frac{1-\xi}{1+\xi}, \xi\in\mathbb{D}$ in Theorem \ref{th4.2}, Schleissinger \cite{SS} proved the corresponding result
for $0\leqslant t<+\infty.$ There is a problem: whether will Theorem \ref{th4.2} still hold for all $0\leqslant t<+\infty?$  The same
problem was given by Chiril\u{a} \cite{C2} when the operator $\Phi_{n,\widehat{\alpha},\widehat{\beta}}$ is replaced by the general Pfaltzgraff-Suffridge extension operator $\Psi_{n,\widehat{\alpha}}$.

\end{rem}


\subsection*{Acknowledgment}
The project is supported by the National Natural Science Foundation of China
(No. 11671306).


\begin{thebibliography}{1}

\bibitem{bf}Barnard, R., FitzGerald, C., Gong, S.: A distortion theorem for biholomorphic mappings in $\mathbb{C}^2$. Trans.
Am. Math. Soc. \textbf{344}, 907--924 (1994)


\bibitem{BGK} Bracci, F.: Shearing process and an example of a bounded support function in  $S^0(\mathbb{B}^2)$.
Comput. Methods Funct. Theory \textbf{15}, 151--157 (2015)

\bibitem{ch}Cartan, H.: Sur la possibilit\'{e} d\'{e}tendre aux fonctions de plusieurs variables complexes la th\'{e}orie des
fonctions univalentes, in: Montel P. (Ed.), Lecons sur les Fonctions Univalentes ou Multivalentes, Gauthier-Villars, Paris, 1933


\bibitem{C1}Chiril\u{a}, T.: An extension operator associated
with certain g-loewner chains. Taiwanese J. Math.
\textbf{17(5)}, 1819--1837 (2013)


\bibitem{C2}Chiril\u{a}, T.: Extreme points, support points and $g$-loewner
chains associated with Roper-suffridge
and pfaltzgraff-suffridge extension operators. Complex Anal. Oper. Theory \textbf{9}, 1781--1799 (2015)


\bibitem{C4} Chiril\u{a}, T., Hamada, H., Kohr, G.: Extreme points and support points for mappings with $g$-parametric
representation in $\mathbb{C}^n$. Mathematica (Cluj) \textbf{56(79)}, 21--40 (2014)


\bibitem{chhk}Chu, C. H., Hamada, H., Honda T., Kohr G.: Distortion theorems for convex mappings on homogeneous
balls. J. Math. Anal. Appl. \textbf{369(2)}, 437--442 (2010)


\bibitem{dpl} Duren, P. L.: Univalent functions, Grundlehren der
mathematischen Wtssenschaften \textbf{259}. New York, NY:
Springer-Verlag

\bibitem{GHK1}Graham, I., Hamada, H., Kohr, G.: Parametric representation of univalent mappings in
several complex variables. Canadian J. Math. \textbf{54}, 324--351 (2002)

\bibitem{GHK4} Graham, I., Hamada, H., Kohr, G., Kohr, M.: Extreme points, support points and the Loewner variation
in several complex variables. Sci. China Math. \textbf{55}, 1353--1366 (2012)

\bibitem{GHK3} Graham, I., Hamada, H., Kohr, G., Kohr, M.: Extremal properties associated with univalent subordination
chains in $\mathbb{C}^n$. Math. Ann. \textbf{359}, 61--99 (2014)

\bibitem{GHK10} Graham, I., Hamada, H., Kohr, G., Kohr, M.: Bounded support points for mappings with $g$-parametric representation in $\mathbb{C}^2$.
J. Math. Anal. Appl. \textbf{454}, 1085--1105 (2017)

\bibitem{GHK2}Graham, I., Hamada, H., Kohr, G., Suffridge, T. J.: Extension operators for locally
univalent mappings. Michigan Math. J. \textbf{50}, 37--55 (2002)

\bibitem{GHK5} Graham, I., Kohr, G., Pfaltzgraff, J. A.: Parametric representation and linear functionals associated with
extension operators for biholomorphic mappings. Rev. Roumaine Math. Pures Appl. \textbf{52}, 47--68 (2007)

\bibitem{G10} Gurganus, K. R.: $\Phi$-like holomorphic functions
in $\mathbb{C}^n$ and banach spaces. Proc. Am. Math. Soc. \textbf{205}, 389--406 (1975)

\bibitem{HH2}Hamada, H., Honda, T., Kohr, G.: Growth theorems and coefficient bounds for univalent holomorphic mappings which have parametric representation. J. Math. Anal.
Appl. \textbf{317}, 302--319 (2006)

\bibitem{hkg}Hamada, H., Kohr, G.: Growth and distortion results for convex mappings in infinite dimensional
spaces. Complex Variables  \textbf{47(4)}, 291--301 (2002)

\bibitem{KK} Kikuchi, K.: Starlike and convex mappings in several complex variables. Pacific J. Math. \textbf{44}, 569--580
(1973)

\bibitem{ks}Krantz, S.: The Schwarz lemma at the boundary. Complex Var. Elliptic Equ. \textbf{56(5)}, 455--468 (2011)

\bibitem{lw}Liu, T. S., Wang, J. F., Tang, X. M.: Schwarz lemma at the boundary of the unit ball in $\mathbb{C}^n$
and its applications. J. Geom. Anal.
\textbf{25}, 1890--1914 (2015)

\bibitem{ll1}Liu, X. S., Liu, T. S.: On the sharp distortion theorems for a subclass of starlike mappings in several
complex variables. Taiwanese J. Math. \textbf{19(2)}, 363--379 (2015)

\bibitem{ll2}Liu, X. S., Liu, T. S.: Sharp distortion theorems for a subclass of
biholomorphic mappings which have a parametric
representation in several complex variables. Chin. Ann. Math. \textbf{37B(4)}, 553--570 (2016)

\bibitem{PR} Pell, R.: Support point functions and the Loewner variation. Pacific J. Math. \textbf{86}, 561--564 (1980)

\bibitem{PJ1}Pfaltzgraff, J. A.: Subordination chains and univalence of holomorphic mappings in $\mathbb{C}^n$. Math. Ann.
\textbf{210}, 55--68 (1974)

\bibitem{RK} Roper, K., Suffridge, T. J.: Convex mappings on the unit ball of $\mathbb{C}^n$. J. Anal. Math. \textbf{65}, 333--347 (1995)

\bibitem{SS} Schleissinger, S.: On support points of the class $S^0(B^n)$. Proc. Am. Math. Soc. \textbf{142(11)}, 3881--3887 (2014)

\bibitem{ST} Suffridge, T. J.: Starlikeness, convexity and other geometric properties of holomorphic maps in higher
dimensions. Lecture Notes in Math, vol. \textbf{599}, pp. 146--159. Springer, New York (1976)

\bibitem{TZ}Tu, Z. H., Zhang, S.: The Schwarz lemma at the boundary of the symmetrized bidisc. J. Math. Anal. Appl. \textbf{459}, 182--202 (2018)

\bibitem{XLT2} Xu, Q. H., Liu, T. S.: On the growth and covering Theorem for normalized
biholomorphic Mappings. Chin. Ann. Math.
\textbf{30(A)}, 213--220 (2009)

\bibitem{XLT3}Zhang, X. F.: The growth theorems for subclasses of
biholomorphic mappings in several complex
variables. Cogent Mathematics \textbf{4}, 1--11 (2017)
\end{thebibliography}
\end{document}